\documentclass{article}
\usepackage[english]{babel}
\usepackage[letterpaper,top=2cm,bottom=2cm,left=3cm,right=3cm,marginparwidth=1.75cm]{geometry}
\usepackage{amsmath}
\usepackage{amssymb}
\usepackage{amsthm}
\usepackage{booktabs}
\usepackage{graphicx}
\usepackage[colorlinks=true,allcolors=blue]{hyperref}
\usepackage{tikz}
\usepackage{subcaption}
\usetikzlibrary{arrows.meta,positioning}

\tikzset{
 >=Latex,
 semithick,
 state/.style={circle,draw,minimum size=20pt,inner sep=0pt,font=\small},
 elab/.style={font=\footnotesize,fill=white,inner sep=1.5pt,align=center}
}
\theoremstyle{definition}
\newtheorem{definition}{Definition}[section]
\newtheorem{example}[definition]{Example}
\newtheorem{remark}[definition]{Remark}

\theoremstyle{plain}
\newtheorem{lemma}[definition]{Lemma}
\newtheorem{proposition}[definition]{Proposition}
\newtheorem{theorem}[definition]{Theorem}
\newtheorem{corollary}[definition]{Corollary}

\newcommand{\X}{\mathcal{X}}
\newcommand{\Sset}{\mathcal{S}}
\newcommand{\eps}{\epsilon}
\newcommand{\one}{\mathbf{1}}

\title{Identifiability of finite-state $\eps$-machines from finite summaries}
\author{Paweł Wieczyński\thanks{Ph.D. student, Institute of Computer Science, Polish Academy of Sciences, Warsaw, Poland. Email: \mbox{pawel.wieczynski@ipipan.waw.pl}.}}
\date{}

\begin{document}
\maketitle

\begin{abstract}
We study how much observable finite-dimensional information is sufficient to identify a finite-state generator $\eps$-machine. The observable summaries are labeled tables of conditional probabilities given finite histories and contiguous block laws. For stationary ergodic finite-alphabet processes, we first prove that a table with past horizon $L$ and future horizon $R$ is equivalent to the block law of length $L+R$. We then recall the classical linear-realization bound. Two hidden Markov presentations with $r$ and $s$ states agree on the entire process law if they agree on words through length $r+s-1$. Our main result gives a structural synchronization bound adapted to exact finite-state generator $\eps$-machines. If their target synchronization radii are $a$ and $b$, and their maximum predictive separation radius is $d$, then equality of one block law of length $a+b+d+1$ implies isomorphism. The proof aligns the states of both machines with words that synchronize the two generators simultaneously. For the full-support binary context subfamily of order $m$ with pairwise distinct parameters, the structural synchronization bound is $2m+2$, whereas the state-count and rank-dimension forms of the classical linear-realization bound are $2^{m+1}-1$. A construction shows that a horizon of order $m$ is necessary. When the order-$m$ context structure is known in advance, the sharp horizon is $m+1$ symbols. Thus the raw state-count and rank-dimension formulas can be exponentially pessimistic on this subfamily. The comparison is between general sufficient bounds, not with an optimal realization horizon.
\end{abstract}

\medskip
\noindent\textbf{Keywords:} $\eps$-machines, hidden Markov models, identifiability, probabilistic automata, synchronization, unifilarity.

\medskip
\noindent\textbf{MSC 2020:} Primary 60G10. Secondary 60J10, 62M05, 68Q45, 68Q70.

\section{Introduction}\label{sec:introduction}

Observed stochastic systems often admit finite hidden representations even when the observed process is not Markov. An $\eps$-machine represents a process through predictive states obtained by identifying pasts that induce the same conditional law of the future. In the finite generator setting used here, these states form a strongly connected, unifilar, edge-emitting HMM with probabilistically distinct states. We ask which finite portion of the observable law already determines this presentation up to isomorphism. Without identifiability, distinct hidden mechanisms remain observationally equivalent and no estimator can consistently select the intended mechanism. The question addressed here is at the population level and is distinct from estimation from finite data.

\paragraph{Hidden Markov models and identifiability.}
Hidden Markov models (HMM) have been studied through functions of finite Markov chains, statistical inference, and information theory since the early work of Blackwell and Koopmans, Gilbert, and Baum and Petrie \cite{Blackwell_Koopmans_1957,Gilbert_1959,Baum_Petrie_1966}. See also the survey of Ephraim and Merhav \cite{Ephraim_Merhav_2002}. For the deterministic state-output subclass, Ito, Amari, and Kobayashi gave a necessary and sufficient linear-algebraic equivalence criterion and studied the resulting minimum effective degree of freedom \cite{Ito_Amari_Kobayashi_1992}. For stationary finite-output HMMs, Allman, Matias, and Rhodes use Kruskal's theorem after grouping observations into three blocks that are independent conditional on the central hidden state. They obtain generic identifiability up to state relabeling from a sufficiently long finite block \cite[Theorem~6]{Allman_Matias_Rhodes_2009}. For a stationary HMM with a known number $K$ of hidden states, Alexandrovich, Holzmann, and Leister obtain identification up to state relabeling from $2K+1$ observations under an ergodic full-rank transition matrix and pairwise distinct state-dependent laws \cite[Theorem~1]{Alexandrovich_Holzmann_Leister_2016}. These results concern general hidden presentations and their parameters. The present paper instead studies a canonical predictive presentation with a deterministic labeled state update.

Gassiat, Cleynen, and Robin identify a state-emitting HMM up to state relabeling from the law of three consecutive observations \cite{Gassiat_Cleynen_Robin_2016}. Their theorem assumes that the number of hidden states is known, the transition matrix has full rank, and the state-dependent emission laws are linearly independent. It is a useful benchmark based on a short window, but it does not apply to our entire model class. Our generators are edge-emitting, and our two general finite-horizon bounds assume neither full rank nor linear independence of emission laws. The difference in emission convention alone does not exclude an alternative state-emitting representation. On an alphabet of size $q$, however, at most $q$ probability mass functions can be linearly independent. Thus a binary state-emitting HMM satisfying their emission assumption has at most two states. Proposition~\ref{prop:context-rank} shows that the full-support binary context subfamily of order $m$ with pairwise distinct parameters has Hankel rank $2^m$. Since the Hankel rank is at most the state count of every HMM representation, for $m\ge2$ these processes admit no alternative HMM representation with at most two states. Consequently, no state-emitting representation of these processes satisfies the emission assumption of the three-observation theorem. We therefore compare numerical horizons only when the results apply to the same process class.

\paragraph{Computational mechanics.}
Computational mechanics makes the predictive construction above formal \cite{Crutchfield_Young_1989,Shalizi_Crutchfield_2001}. Its equivalence classes are causal states, and their minimal unifilar presentation is the $\eps$-machine. Travers and Crutchfield proved the equivalence of the finite-state generator definition with the history construction from predictive equivalence classes \cite{Travers_Crutchfield_2025}. Their synchronization theory distinguishes exact machines, for which a finite output word determines the current state, from nonexact machines, for which state uncertainty still vanishes asymptotically \cite{Travers_Crutchfield_2011a,Travers_Crutchfield_2011b}. The topology and empirical reconstruction of predictive states have also been studied \cite{Loomis_Crutchfield_2023,Shalizi_Shalizi_Crutchfield_2002}. These predictive and synchronization properties connect finite-state generator $\eps$-machines to stochastic and deterministic automata.

\paragraph{Stochastic automata, realization, and synchronization.}
A finite hidden presentation gives a linear representation of word probabilities. The rank of the associated Hankel matrix is the minimal dimension of an unrestricted real linear realization. Carlyle and Paz developed the connection between finite rank and realization for stochastic finite automata \cite{Carlyle_Paz_1971}. If two probabilistic automata with $r$ and $s$ states are inequivalent, Paz's result gives a separating word of length at most $r+s-1$ \cite{Paz_1971}. Tzeng later gave a polynomial-time linear-algebraic equivalence algorithm \cite{Tzeng_1992}. Observable operator models and modern HMM partial realization theory connect string probabilities, finite Hankel blocks, spectral methods, and hidden realizations \cite{Jaeger_2000,Huang_Ge_Kakade_Dahleh_2016}. This literature supplies the classical linear-realization bound used in Section~\ref{sec:observable}.

Short realization horizons are also known under genericity assumptions. For stationary HMMs with at most $K$ hidden states and alphabet size $q\ge2$, Huang, Ge, Kakade, and Dahleh obtain minimal quasi-HMM and HMM realizations from block laws with horizon of order $1+\log_q K$ when the transition and observation parameters are in general position \cite{Huang_Ge_Kakade_Dahleh_2016}. Thus logarithmic dependence on the state count is already established in generic realization theory. Our structural bound instead applies to every generator in the stated exact unifilar class without a genericity assumption. A result holding almost everywhere in the full HMM parameter space does not automatically hold on a prescribed structured subfamily. In particular, its applicability to the context family in Section~\ref{sec:context} would require a separate argument. Our synchronization and Hankel calculations establish the stated horizons directly for every member of the subfamily with pairwise distinct parameters.

Classical conformance testing provides another relevant automata-theoretic comparison. Chow's automata-theoretic testing method, now commonly presented as the W-method, combines words that reach states, one-step extensions that check successors, and distinguishing words that identify states \cite{Chow_1978}. Kocsis and Rot recently generalized this architecture to automata in monoidal closed categories and derived complete test suites for, among other models, weighted automata over a field \cite{Kocsis_Rot_2025}. In the weighted setting, the distinguishing condition acts on the full linear state space, rather than only requiring pairwise separation of the finitely many generator states.

Discarding transition probabilities from a finite unifilar generator leaves a partial deterministic automaton on its positive-probability labeled transition graph. Jonoska calls such a deterministic presentation synchronizing when every target vertex has an admissible word such that every path with that label terminates at the target \cite{Jonoska_1996}. Thus the existence of a synchronizing word for every target state is an established support property. Cai and Frongillo make the partial-DFA correspondence explicit. Their exact synchronization convention requires a word to be defined at least somewhere and to map every state on which it is defined to one state \cite{Cai_Frongillo_2022}. This is the support-level convention used here. Bradshaw, Clow, and Stacho study the related problem of using one word to synchronize two complete deterministic automata to prescribed targets \cite{Bradshaw_Clow_Stacho_2026}. Their setting has controlled, total transitions. Our setting is autonomous and stochastic. Admissible words depend on the state and may differ between models before equality of finite-word supports is imposed.

\paragraph{Current contribution and relation to prior work.}
The paper first establishes two baselines. Under stationarity and ergodicity, a labeled conditional table with past horizon $L$ and future horizon $R$ contains exactly the same information as the block law of length $L+R$. We also state and prove the classical $r+s-1$ linear-realization bound for two finite hidden presentations with $r$ and $s$ states. This second result is included as a benchmark and is not a contribution of the paper.

The general architecture of the main proof is classical: as in the W-method, one reaches states, checks one-symbol successors, and uses finite continuations to distinguish states \cite{Chow_1978,Kocsis_Rot_2025}. Target-by-target synchronization is also classical \cite{Jonoska_1996,Cai_Frongillo_2022}, and simultaneous synchronization has precedent for complete deterministic automata \cite{Bradshaw_Clow_Stacho_2026}. To the author's knowledge, the new point here is the way these ingredients are obtained and combined under the stationary observable law. Equality of finite-word supports is used to construct, for every state of either generator, a positive-probability history that is admissible in both models and synchronizes both, even though their positive-probability labeled transition graphs may initially differ. Equality of the corresponding conditional rows then matches the truncated state morphs. The predictive separation radius only needs to distinguish the finitely many generator states. Unlike the weighted-automata instance of the generalized W-method, it need not distinguish the full linear state space. This simultaneous-synchronization step yields the pair-dependent structural horizon $a+b+d+1$.

The target synchronization radius is the maximum, over states, of the shortest word length needed to synchronize to each state. The predictive separation radius is the maximum, over distinct pairs of states, of the shortest word length needed to distinguish their future laws. Formal definitions appear in Section~\ref{sec:synchronization}. If two exact finite-state generator $\eps$-machines have target synchronization radii $a$ and $b$, and the larger of their predictive separation radii is $d$, then equality of one block law of length $a+b+d+1$ implies isomorphism. On classes with target synchronization and predictive separation radii bounded by $A$ and $D$, the fixed block law of length $2A+D+1$ gives uniform identifiability.

An explicit binary context family with full Hankel rank shows how our structural bound can differ from the classical formulas. On the full-support subfamily of order $m$ with pairwise distinct parameters, the structural bound is $2m+2$. The raw state-count and rank-dimension bounds are $2^{m+1}-1$. A construction shows that a horizon of order $m$ is necessary. If the order-$m$ context structure is known in advance, the sharp horizon is $m+1$ symbols. These are comparisons between general sufficient bounds, not with an optimal realization horizon.

The results identify finite structural witnesses of equality. They do not provide estimators or algorithms for recognizing synchronizing rows from an arbitrary numerical table. Sections~\ref{sec:setting} and~\ref{sec:synchronization} fix the generator setting and introduce the two structural radii. Sections~\ref{sec:observable} and~\ref{sec:main} give the two finite-horizon bounds. Section~\ref{sec:context} studies the binary context family.

\section{Finite-state generator setting}\label{sec:setting}

Let $\X$ be a nonempty finite alphabet, let $\X^m$ denote the set of words of length $m$, let $\X^*$ denote the set of finite words, and let $\lambda$ be the empty word. For a bi-infinite process $(X_t)_{t \in \mathbb Z}$, write $X_i^j = (X_i, \ldots, X_j)$. We use capitals for random variables and small letters for their realizations, $X=x$.

\begin{definition}[Finite-state generator $\eps$-machine]\label{def:generator}
A finite-state generator $\eps$-machine is a quadruple
\begin{align*}
  G=(\Sset, \pi, \delta, \{ e_\sigma \}_{\sigma \in \Sset})
\end{align*}
with the following properties.
\begin{enumerate}
  \item $\Sset = \{ \sigma_1, \dots , \sigma_k \}$ is a nonempty finite set of states and every $e_\sigma$ is a probability mass function on $\X$.
  \item A state-symbol pair $(\sigma,x)$ is admissible when $e_\sigma(x)>0$, and the successor $\delta(\sigma,x)$ is defined on every admissible pair. Thus the presentation is unifilar.
  \item The directed graph of positive-probability labeled transitions is strongly connected.
  \item The conditional future laws are pairwise distinct: for $\sigma \ne \tau$, there is a word $w\in\X^*$ such that
  $$
    P(X_1^{|w|}=w\mid S_0=\sigma)
    \ne
    P(X_1^{|w|}=w\mid S_0=\tau).
  $$
  \item If
 $$
    T_{\sigma \tau} = \sum_{\substack{x : \, e_\sigma(x)>0
    \\ \delta(\sigma,x) = \tau}} e_\sigma(x),
  $$
  then $\pi$ is the unique probability vector satisfying $\pi T=\pi$.
\end{enumerate}
\end{definition}

Strong connectivity makes $T$ irreducible, so the stationary distribution $\pi$ exists and is unique~\cite{Norris_1997}. For admissible $(\sigma,x)$, we use the edge-emitting convention
\begin{equation}\label{eq:time-convention}
  P(X_{t+1}=x,S_{t+1}=\delta(\sigma,x)\mid S_t=\sigma)
  =e_\sigma(x).
\end{equation}
Equivalently, $X_{t+1}$ labels the transition from $S_t$ to $S_{t+1}$. The stationary joint law is obtained by taking $S_0\sim\pi$ and applying \eqref{eq:time-convention} in both time directions through the stationary Markov law. Define the finite set of positive-probability labeled edges by
\begin{equation*}
  \mathcal{E}
  =\bigl\{(\sigma,x,\delta(\sigma,x)): e_\sigma(x)>0\bigr\}.
\end{equation*}
The labeled-edge process $Y_t=(S_{t-1},X_t,S_t)$ is a stationary irreducible Markov chain on $\mathcal{E}$. With $\ell(\sigma,x,\tau)=x$, the coordinatewise map $(y_t)_{t\in\mathbb{Z}}\mapsto(\ell(y_t))_{t\in\mathbb{Z}}$ is measurable and commutes with the shift. Since $X_t=\ell(Y_t)$, the observable process is a stationary ergodic factor of this chain. Aperiodicity is unnecessary for ergodicity. Using labeled edges retains the emission randomness, which need not be determined by the state sequence alone.

For each $x\in\X$, define the symbol-labeled transition matrix
\begin{equation}\label{eq:symbol-matrix}
  T^{(x)}_{\sigma\tau}
  =\begin{cases}
    e_\sigma(x), & \text{if }(\sigma,x)\text{ is admissible and }\delta(\sigma,x)=\tau,\\
    0, & \text{otherwise}.
  \end{cases}
\end{equation}
Then $T=\sum_xT^{(x)}$. If $w=x_1\cdots x_m \in \X^m$, put
\begin{align*}
  T^{(w)}=T^{(x_1)}\cdots T^{(x_m)},
  \qquad T^{(\lambda)}=I,
\end{align*}
and define the state morph and word update by
\begin{align}
  p_\sigma(w)
  &=P(X_1^m=w\mid S_0=\sigma)
   =\bigl(T^{(w)}\one\bigr)_\sigma,\label{eq:morph}\\
  \delta(\sigma,w)
  &=\delta(\delta(\cdots\delta(\sigma,x_1),x_2)\cdots,x_m),
\end{align}
whenever the labeled path is admissible. Set $p_\sigma(\lambda)=1$ and $\delta(\sigma,\lambda)=\sigma$.

Two generators $G=(\Sset,\pi,\delta,e)$ and $G'=(\Sset',\pi',\delta',e')$ are \emph{isomorphic}, written $G\cong G'$, if there is a bijection $f:\Sset\to\Sset'$ such that
\begin{align}
  \pi'(f(\sigma))&=\pi(\sigma),\label{eq:iso-pi}\\
  e'_{f(\sigma)}(x)&=e_\sigma(x),\label{eq:iso-emission}\\
  \delta'(f(\sigma),x)&=f(\delta(\sigma,x))\label{eq:iso-transition}
\end{align}
for every admissible $(\sigma,x)$. Isomorphism handles only the arbitrary labeling of states.

For a stationary process $P$, its \emph{history causal states} are the equivalence classes of infinite pasts having the same conditional distribution of the future. In the finite setting used here, the equivalence theorem for history and generator presentations states that a process generated by a finite-state generator $\eps$-machine has, modulo null histories, a finite history machine isomorphic to the generator. Conversely, a stationary ergodic process with a finite history machine admits the corresponding generator \cite{Travers_Crutchfield_2025}. Thus equality of the observable process laws implies isomorphism of their finite-state generator $\eps$-machines. Probabilistic distinctness gives minimality within the canonical unifilar predictive class. It does not assert minimality among arbitrary, possibly nonunifilar, hidden presentations.

\section{Synchronization and predictive separation}\label{sec:synchronization}

Under convention \eqref{eq:time-convention}, observing $X_1^m$ moves the state from $S_0$ to $S_m$. Equivalently, a past block $X_{-m+1}^0$ ends at $S_0$. We use both stationary versions below.

\begin{definition}[Synchronizing word and exactness]\label{def:synchronizing-word}
A positive-probability word $w\in\X^m$, $m\ge0$, \emph{synchronizes to} $\sigma$ if
\begin{align*}
  P(S_m=\sigma\mid X_1^m=w)=1.
\end{align*}
Conditioning on the empty word $\lambda$ is trivial, so $\lambda$ synchronizes if and only if the generator has one state. The generator is \emph{exact} if it has a finite positive-probability synchronizing word. Under the standing assumptions, this is equivalent to the observer determining the current state after a finite number of observations almost surely \cite{Travers_Crutchfield_2011a}.
\end{definition}

By stationarity, if $w$ synchronizes to $\sigma$, then the same word observed as $X_{-m+1}^0=w$ determines $S_0=\sigma$. The Markov property therefore gives the identity for a pure row
\begin{equation}\label{eq:pure-row}
  P(X_1^{|v|}=v\mid X_{-m+1}^0=w)=p_\sigma(v),
  \qquad v\in\X^*.
\end{equation}

\begin{lemma}[Word characterization]\label{lem:word-characterization}
Let $w\in\X^m$ have positive probability. Then $w$ synchronizes to $\sigma$ if and only if every state $\tau$ from which $w$ is admissible satisfies $\delta(\tau,w)=\sigma$.
\end{lemma}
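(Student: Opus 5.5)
The plan is to compute the joint law of $(S_0,X_1^m,S_m)$ directly from the edge-emitting convention \eqref{eq:time-convention}, using unifilarity. By the Markov property and stationarity, for any state $\tau$,
\begin{equation*}
  P(S_0=\tau,\,X_1^m=w)=\pi(\tau)\,p_\tau(w)=\pi(\tau)\bigl(T^{(w)}\one\bigr)_\tau .
\end{equation*}
This quantity is positive exactly when $w$ is admissible from $\tau$. Positivity of $p_\tau(w)$ forces every letter of $w$ to be admissible along the path. Conversely, admissibility gives a product of positive emission probabilities. Here we also use $\pi(\tau)>0$, which holds for every state because $T$ is irreducible by strong connectivity.

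Next, unifilarity shows that, given $S_0=\tau$ and $X_1^m=w$ with $w$ admissible from $\tau$, the state $S_m$ equals $\delta(\tau,w)$ with probability one. This follows by induction on the letters of $w$: each step of \eqref{eq:time-convention} places all its mass on the single successor $\delta(\cdot,x)$. Summing over $\tau$ then gives
\begin{equation*}
  P(S_m=\sigma\mid X_1^m=w)
  =\frac{\sum_{\tau\in A_w,\ \delta(\tau,w)=\sigma}\pi(\tau)p_\tau(w)}{\sum_{\tau\in A_w}\pi(\tau)p_\tau(w)},
\end{equation*}
where $A_w$ denotes the set of states from which $w$ is admissible. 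The denominator is $P(X_1^m=w)>0$, and every summand over $A_w$ is strictly positive.

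Both directions follow from this formula. If every $\tau\in A_w$ satisfies $\delta(\tau,w)=\sigma$, then the numerator equals the denominator and the ratio is $1$. Conversely, suppose the ratio equals $1$ and some $\tau_0\in A_w$ had $\delta(\tau_0,w)\ne\sigma$. Its strictly positive term would then appear in the denominator but not in the numerator, which is a contradiction.

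The case $m=0$ is consistent with this. There $A_\lambda=\Sset$ and $\delta(\tau,\lambda)=\tau$, so the condition holds iff $\Sset=\{\sigma\}$, matching Definition~\ref{def:synchronizing-word}.

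No step is a real obstacle. The only point needing care is the strict positivity of each term, which requires both $\pi(\tau)>0$ and the equivalence between $p_\tau(w)>0$ and admissibility of $w$ from $\tau$.
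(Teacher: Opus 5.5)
Your proof is correct and follows the paper's argument: the paper likewise writes $P(S_m=\sigma\mid X_1^m=w)$ as a ratio of sums of $\pi_\tau p_\tau(w)$ and uses $\pi_\tau>0$, so that every admissible starting state contributes a strictly positive term to the denominator. You also spell out details the paper leaves implicit: the unifilar induction, the equivalence of $p_\tau(w)>0$ with admissibility, and the $m=0$ case.
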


\begin{proof}
Since $\pi_\tau>0$ for every state,
\begin{align*}
P(S_m=\sigma\mid X_1^m=w)
=
\frac{\sum_{\substack{\tau:\,p_\tau(w)>0 \\ \delta(\tau,w)=\sigma}}\pi_\tau p_\tau(w)}
{\sum_\tau\pi_\tau p_\tau(w)}.
\end{align*}
Every admissible starting state has a strictly positive term in the denominator. The ratio equals one exactly when all such states end at $\sigma$.
\end{proof}

Two closure properties follow. If $u$ synchronizes to $\sigma$ and $zu$ is a positive-probability word, then $zu$ also synchronizes to $\sigma$. This is the suffix property. If $z$ synchronizes to $\sigma$, $u$ is admissible from $\sigma$, and $zu$ is a positive-probability word, then $zu$ synchronizes to $\delta(\sigma,u)$. This is synchronization followed by a unifilar update.

\begin{definition}[Structural radii]\label{def:radii}
For $\sigma\in\Sset$, let
\begin{align*}
  \ell_{\mathrm{tar}}(\sigma)
  =\inf\{|w|:w\text{ synchronizes to }\sigma\},
  \qquad \inf\emptyset=\infty,
\end{align*}
and define the \emph{target synchronization radius}
\begin{align*}
  L_{\mathrm{tar}}(G)=\max_{\sigma\in\Sset}\ell_{\mathrm{tar}}(\sigma).
\end{align*}
For distinct states $\sigma,\tau$, let
\begin{align*}
  \ell_{\mathrm{sep}}(\sigma,\tau)
  =\min\{|v|:p_\sigma(v)\ne p_\tau(v)\},
\end{align*}
and define the \emph{predictive separation radius}
\begin{align*}
  L_{\mathrm{sep}}(G)
  =\max_{\sigma\ne\tau}\ell_{\mathrm{sep}}(\sigma,\tau).
\end{align*}
For a generator with one state, set $L_{\mathrm{tar}}(G)=L_{\mathrm{sep}}(G)=0$.
Finally, the standard \emph{synchronization order} \cite{James_Mahoney_Ellison_Crutchfield_2014} is
\begin{align*}
  L_{\mathrm{syn}}(G)
  =\inf\{m:\text{every positive-probability word in }\X^m
    \text{ synchronizes}\},
  \qquad \inf\emptyset=\infty.
\end{align*}
\end{definition}

The target synchronization radius differs from synchronization order. It asks only for one synchronizing word per state, not for every word at a given length to be synchronizing. If a synchronizing word ends at $\sigma_0$, strong connectivity supplies a positive-probability path from $\sigma_0$ to any prescribed $\sigma$. Appending that path gives a synchronizing word to $\sigma$. Thus exactness implies $L_{\mathrm{tar}}(G)<\infty$. Probabilistic distinctness and finiteness similarly imply $L_{\mathrm{sep}}(G)<\infty$. The latter agrees with the maximum pairwise distinguishing length used by Travers and Crutchfield \cite{Travers_Crutchfield_2011b}. If two states agree on all words of length $m$, marginalization makes them agree at every shorter length.

The two radii also have elementary bounds that relate them to alphabet and state cardinalities.

\begin{proposition}[Elementary bounds for the structural radii] \label{prop:radius-bounds}
If $G$ has $r$ states, then
\begin{align}
  L_{\mathrm{sep}}(G)\le r-1. \label{eq:separation-state-bound}
\end{align}
Moreover, if $q=|\X|$ and $A\in\mathbb N_0$, then
\begin{align}
  L_{\mathrm{tar}}(G)\le A
  \quad\Longrightarrow\quad
  |\Sset|\le\sum_{\ell=0}^{A}q^\ell. \label{eq:target-count-bound}
\end{align}
\end{proposition}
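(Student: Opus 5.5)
For \eqref{eq:separation-state-bound} I would use the classical Moore-type partition refinement argument. For $n\ge0$, define the equivalence relation $\sim_n$ on $\Sset$ by $\sigma\sim_n\tau$ iff $p_\sigma(v)=p_\tau(v)$ for all $v$ with $|v|\le n$. By the marginalization remark after Definition~\ref{def:radii}, this is the same as agreement on all words of length exactly $n$.

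The key step is a recursion. Using \eqref{eq:morph},
\begin{align*}
p_\sigma(xv)=e_\sigma(x)\,p_{\delta(\sigma,x)}(v)
\end{align*}
for admissible $(\sigma,x)$, and $p_\sigma(xv)=0$ otherwise. Hence $\sigma\sim_{n+1}\tau$ iff two conditions hold: $e_\sigma=e_\tau$ (this is $\sim_1$), and $\delta(\sigma,x)\sim_n\delta(\tau,x)$ for every $x$ with $e_\sigma(x)>0$. The right-hand side depends only on $\sim_n$.

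Consequently, if $\sim_{n+1}=\sim_n$ for some $n$, then $\sim_{n+2}=\sim_{n+1}$, and by induction the partition is stable from then on. The partitions $\sim_0\supseteq\sim_1\supseteq\cdots$ refine weakly. $\sim_0$ has one class, and each strict refinement adds at least one class. Probabilistic distinctness (Definition~\ref{def:generator}) forces the limit partition to be discrete, with $r$ classes. So strict refinement can occur at most $r-1$ times, and by the stabilization property all of it happens by step $r-1$. Thus $\sim_{r-1}$ is discrete, meaning every distinct pair is separated by a word of length at most $r-1$, which gives $L_{\mathrm{sep}}(G)\le r-1$.

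The main care point is the case $r=1$, which is covered by the convention in Definition~\ref{def:radii}. One must also note that separation at length $\le n$ is exactly what $\ell_{\mathrm{sep}}$ measures.

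For \eqref{eq:target-count-bound} I would build an injection from states to short words. Assume $L_{\mathrm{tar}}(G)\le A$. For each $\sigma$, choose a word $w_\sigma$ of length at most $A$ that synchronizes to $\sigma$. If $w_\sigma=w_\tau$, then by Definition~\ref{def:synchronizing-word} the conditional law of the final state given this positive-probability word is a point mass at both $\sigma$ and $\tau$, so $\sigma=\tau$. Hence $\sigma\mapsto w_\sigma$ is injective into $\bigcup_{\ell=0}^{A}\X^\ell$, whose cardinality is $\sum_{\ell=0}^{A}q^\ell$.

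No real obstacle arises here. For the first bound, the only step needing care is the stabilization claim, which rests entirely on the one-step recursion, and that recursion uses unifilarity.
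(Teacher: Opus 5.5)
Your proof is correct. The second bound, \eqref{eq:target-count-bound}, is exactly the paper's injection argument. For \eqref{eq:separation-state-bound}, however, you take a genuinely different route.

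The paper argues linearly. The subspaces $W_j=\operatorname{span}\{T^{(w)}\one:|w|\le j\}\subseteq\mathbb R^r$ increase from $\dim W_0=1$ and stabilize permanently once $W_{j+1}=W_j$, hence by $j=r-1$. A pair of states agreeing through length $r-1$ then gives a functional $z\mapsto z_\sigma-z_\tau$ that annihilates $W_{r-1}$, and therefore every state morph.

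You instead run Moore's partition refinement and count classes rather than dimensions. Your one-step recursion $\sim_{n+1}=F(\sim_n)$ comes from $p_\sigma(xv)=e_\sigma(x)\,p_{\delta(\sigma,x)}(v)$, so it depends on unifilarity.

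What each buys:
\begin{itemize}
\item Your version is purely combinatorial and is the natural argument for a deterministic labeled update.
\item The paper's version never uses unifilarity, so it holds verbatim for any finite edge-emitting HMM presentation with pairwise distinct state morphs. It is the same reachable-span stabilization mechanism that drives Proposition~\ref{prop:finite-equivalence}.
\item Each sharpens in its own way. The linear argument gives $L_{\mathrm{sep}}(G)\le k-1$, where $k$ is the rank of the state-by-word matrix $(p_\sigma(w))$. This rank can be smaller than $r$ even when all rows are distinct. Your class count gives $L_{\mathrm{sep}}(G)\le r-c_1+1$ for $r\ge2$, where $c_1$ is the number of distinct emission laws $e_\sigma$.
\end{itemize}
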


\begin{proof}
For $j\ge0$, let
\begin{align*}
  W_j
  =\operatorname{span}\bigl\{T^{(w)}\one:|w|\le j\bigr\}
  \subseteq\mathbb R^r.
\end{align*}
The spaces $W_j$ are increasing and $\dim W_0=1$. If $W_{j+1}=W_j$, then $T^{(x)}W_j\subseteq W_j$ for every $x\in\X$, so the sequence has stabilized permanently. Since the ambient dimension is $r$, it follows that $W_{r-1}=W_r$ and that $W_{r-1}$ contains $T^{(w)}\one$ for every $w\in\X^*$.

Suppose that two states $\sigma$ and $\tau$ agree on all word probabilities through length $r-1$. By \eqref{eq:morph}, the linear functional $z\mapsto z_\sigma-z_\tau$ then annihilates $W_{r-1}$. Hence it annihilates $T^{(w)}\one$ for every finite word $w$. The two full state morphs are equal, contrary to probabilistic distinctness. Thus every distinct pair is separated by a word of length at most $r-1$, proving \eqref{eq:separation-state-bound}. The one-state convention gives the same conclusion when $r=1$.

For \eqref{eq:target-count-bound}, choose for each state one word of length at most $A$ that synchronizes to it. These chosen words are distinct, since one word cannot synchronize to two different states. There are $\sum_{\ell=0}^{A}q^\ell$ words of length at most $A$, which proves the claim.
\end{proof}

Thus a bound on $L_{\mathrm{tar}}$ already bounds the number of states, although for $q>1$ the resulting state-count bound grows exponentially in the target synchronization radius. This explains how a horizon expressed directly through the structural radii can scale differently from the raw state-count formula.

All finite-state generator $\eps$-machines are asymptotically synchronizable in the sense that the observer's posterior state entropy tends to zero, with exponential mean bounds under the standing assumptions~\cite{Travers_Crutchfield_2011a,Travers_Crutchfield_2011b}. Exact machines possess a finite positive-probability synchronizing word. Nonexact machines need not possess any pure row conditioned on a finite history. The main structural synchronization theorem in this paper concerns the exact class.

\begin{figure}[!htbp]
\centering
\begin{subfigure}[t]{0.47\textwidth}
\centering
\begin{tikzpicture}
\node[state] (A) at (0,0) {$A$};
\node[state] (B) at (3.6,0) {$B$};
\draw[->] (A) edge[loop left,min distance=9mm] node[elab,xshift=-2mm] {$\frac12\mid 0$} (A);
\draw[->] (A) edge[bend left=18] node[elab,above=1pt] {$\frac12\mid 1$} (B);
\draw[->] (B) edge[bend left=18] node[elab,below=1pt] {$1\mid 1$} (A);
\end{tikzpicture}
\caption{Even Process}
\label{fig:sync-even}
\end{subfigure}
\hfill
\begin{subfigure}[t]{0.47\textwidth}
\centering
\begin{tikzpicture}
\node[state] (A) at (0,0) {$A$};
\node[state] (B) at (3.6,0) {$B$};
\draw[->] (A) edge[bend left=18] node[elab,above=1pt] {$p\mid 1,\ 1-p\mid 0$} (B);
\draw[->] (B) edge[bend left=18] node[elab,below=1pt] {$q\mid 1,\ 1-q\mid 0$} (A);
\end{tikzpicture}
\caption{Alternating Biased Coin Process}
\label{fig:sync-alt}
\end{subfigure}

\vspace{1.2em}

\begin{subfigure}[t]{0.64\textwidth}
\centering
\begin{tikzpicture}
\node[state] (A) at (0,2.8) {$A$};
\node[state] (B) at (-3.7,-1.4) {$B$};
\node[state] (C) at (3.7,-1.4) {$C$};
\draw[->] (A) edge[loop above,min distance=9mm] node[elab,yshift=2pt] {$\frac12\mid 0$} (A);
\draw[->] (B) edge[loop left,min distance=9mm] node[elab,xshift=-2mm] {$\frac12\mid 1$} (B);
\draw[->] (C) edge[loop right,min distance=9mm] node[elab,xshift=2mm] {$\frac12\mid 2$} (C);
\draw[->] (A) edge[bend left=14] node[elab,pos=.54,xshift=-8pt] {$\frac14\mid 1$} (B);
\draw[->] (B) edge[bend left=14] node[elab,pos=.50,xshift=-8pt] {$\frac14\mid 0$} (A);
\draw[->] (A) edge[bend left=14] node[elab,pos=.54,xshift=8pt] {$\frac14\mid 2$} (C);
\draw[->] (C) edge[bend left=14] node[elab,pos=.50,xshift=8pt] {$\frac14\mid 0$} (A);
\draw[->] (B) edge[bend left=10] node[elab,above=2pt] {$\frac14\mid 2$} (C);
\draw[->] (C) edge[bend left=10] node[elab,above=2pt] {$\frac14\mid 1$} (B);
\end{tikzpicture}
\caption{Ternary Reset Process}
\label{fig:sync-ternary}
\end{subfigure}

\vspace{1.2em}

\begin{subfigure}[t]{0.64\textwidth}
\centering
\begin{tikzpicture}
\node[state] (A) at (0,2.8) {$A$};
\node[state] (B) at (-3.7,-1.4) {$B$};
\node[state] (C) at (3.7,-1.4) {$C$};
\draw[->] (A) edge[loop above,min distance=11mm] node[elab,yshift=3pt] {$\frac18\mid 0,\ \frac58\mid\star$} (A);
\draw[->] (B) edge[loop left,min distance=9mm] node[elab,xshift=-2mm] {$\frac18\mid 1$} (B);
\draw[->] (C) edge[loop right,min distance=11mm] node[elab,xshift=2mm] {$\frac14\mid 2,\ \frac14\mid\star$} (C);
\draw[->] (A) edge[bend left=14] node[elab,pos=.54,xshift=-8pt] {$\frac18\mid 1$} (B);
\draw[->] (B) edge[bend left=14] node[elab,pos=.50,xshift=-8pt] {$\frac18\mid 0$} (A);
\draw[->] (A) edge[bend left=14] node[elab,pos=.54,xshift=8pt] {$\frac18\mid 2$} (C);
\draw[->] (C) edge[bend left=14] node[elab,pos=.50,xshift=8pt] {$\frac14\mid 0$} (A);
\draw[->] (B) edge[bend left=10] node[elab,above=2pt] {$\frac18\mid 2,\ \frac58\mid\star$} (C);
\draw[->] (C) edge[bend left=10] node[elab,above=2pt] {$\frac14\mid 1$} (B);
\end{tikzpicture}
\caption{Delayed Separation Reset Process}
\label{fig:sync-delayed}
\end{subfigure}
\caption{The edge label $p\mid x$ means that symbol $x$ is emitted with probability $p$ on that edge.}
\label{fig:sync-examples}
\end{figure}

\begin{example}[Four synchronization regimes]\label{ex:four-machines}
The machines in Figure~\ref{fig:sync-examples} separate the synchronization and predictive-separation notions.
\begin{enumerate}
  \item In the \emph{Even Process}, $0$ synchronizes to $A$ and $01$ to $B$, while $0$ separates the states. Hence $L_{\mathrm{tar}}=2$ and $L_{\mathrm{sep}}=1$. $1^m$ is nonsynchronizing for every $m\ge1$, so $L_{\mathrm{syn}}=\infty$.
  \item In the \emph{Alternating Biased Coin Process}, assume $0<p,q<1$ and $p\ne q$. Both symbols are possible from both states and the phase always alternates, so $L_{\mathrm{tar}}=L_{\mathrm{syn}}=\infty$, whereas $L_{\mathrm{sep}}=1$.
  \item In the \emph{Ternary Reset Process}, symbols $0,1,2$ reset to $A,B,C$, respectively, and the one-symbol distributions are distinct. Thus $L_{\mathrm{tar}}=L_{\mathrm{syn}}=L_{\mathrm{sep}}=1$.
  \item In the \emph{Delayed Separation Reset Process}, $0,1,2$ again reset to $A,B,C$, so $L_{\mathrm{tar}}=1$. The word $\star^m$ remains nonsynchronizing for every $m$, and hence $L_{\mathrm{syn}}=\infty$. States $A$ and $B$ have the same distribution of the next symbol, but $\star0$ separates them. Consequently, $L_{\mathrm{sep}}=2$.
\end{enumerate}
\end{example}

\section{Observable summaries and finite equivalence}\label{sec:observable}

We now distinguish three claims that are sometimes conflated. Equality of process laws is an observational statement. Isomorphism of canonical generators follows from equality of process laws. Identification from a fixed summary is the stronger assertion that one prescribed finite observable already forces these conclusions throughout a class.

\begin{definition}[Uniform identifiability and pairwise finite distinguishability]\label{def:identifiability}
Let $\Theta$ be a class of stationary processes with finite-state generator $\eps$-machines, and let
\begin{align*}
  \{\Psi_h:\Theta\to\mathcal{Z}_h\}_{h\in\mathcal{H}}
\end{align*}
be a family of observable summaries indexed by finite horizons. The set $\mathcal{H}$ depends on the summary family. For block laws, $\mathcal{H}=\mathbb{N}_0=\{0,1,\ldots\}$ and $\Psi_n=P_n$. For conditional tables, $\mathcal{H}=\mathbb{N}^2$, where $\mathbb{N}=\{1,2,\ldots\}$. In this case $h=(L,R)$ and $\Psi_h=C_{L,R}$.

The family gives \emph{uniform identifiability} on $\Theta$ if one horizon can be chosen before the pair of processes is quantified:
\begin{equation}
  \exists h\in\mathcal{H}\ \ \forall P,Q\in\Theta:\qquad
  \Psi_h(P)=\Psi_h(Q)\quad\Longrightarrow\quad G_P\cong G_Q.
  \tag{U}\label{eq:uniform-identifiability}
\end{equation}
For a particular $h$ satisfying \eqref{eq:uniform-identifiability}, we also say that $\Psi_h$ identifies generators on $\Theta$. The family gives \emph{pairwise finite distinguishability} on $\Theta$ if the horizon may depend on the pair:
\begin{equation}
  \forall P,Q\in\Theta\ \ \exists h=h(P,Q)\in\mathcal{H}:\qquad
  \Psi_h(P)=\Psi_h(Q)\quad\Longrightarrow\quad G_P\cong G_Q.
  \tag{P}\label{eq:pairwise-distinguishability}
\end{equation}
Thus uniform identifiability has the quantifier order $\exists h\,\forall(P,Q)$, whereas pairwise finite distinguishability has the order $\forall(P,Q)\,\exists h$.
\end{definition}

For a stationary process $P$ and $n\ge0$, write
\begin{align*}
  P_n(w)=P(X_1^n=w),\qquad w\in\X^n,
\end{align*}
with $P_0(\lambda)=1$. For $L\ge1$, define the domain of positive-probability past words
\begin{align*}
  \mathcal{U}_L(P)
  =\left\{u\in\bigcup_{\ell=1}^L\X^\ell:
   P(X_{-|u|+1}^0=u)>0\right\}.
\end{align*}
For $u\in\mathcal{U}_L(P)$ and $v\in\{\lambda\}\cup\bigcup_{r=1}^R\X^r$, put
\begin{align*}
  P(v\mid u)
  =P(X_1^{|v|}=v\mid X_{-|u|+1}^0=u),
  \qquad P(\lambda\mid u)=1.
\end{align*}
The conditional summary
\begin{equation}\label{eq:conditional-table}
  C_{L,R}(P)
  =\bigl(P(v\mid u)\bigr)_{u\in\mathcal{U}_L(P),\,|v|\le R}
\end{equation}
is a \emph{labeled} table. Equality of two tables includes equality of their row domains of positive-probability words and coordinatewise equality at every pair $(u,v)$ indexed by words. The labels are part of the observable.

\begin{proposition}[Conditional tables and block laws]\label{prop:table-block}
Let $P$ and $Q$ be stationary ergodic processes on the same finite alphabet. For every $L,R\ge1$,
\begin{align*}
  C_{L,R}(P)=C_{L,R}(Q)
  \quad\Longleftrightarrow\quad
  P_{L+R}=Q_{L+R}.
\end{align*}
\end{proposition}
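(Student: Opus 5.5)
The proof is a direct computation with stationarity and marginalization. I will prove the two implications separately. The main point is that the row domain $\mathcal{U}_L$ and all table entries are determined by the law of words of length at most $L+R$, and conversely.

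For ($\Leftarrow$), assume $P_{L+R}=Q_{L+R}$. Marginalization gives $P_n=Q_n$ for every $n\le L+R$. By stationarity, $P(X_{-|u|+1}^0=u)=P_{|u|}(u)$, so the row domains $\mathcal{U}_L(P)$ and $\mathcal{U}_L(Q)$ coincide. For $u$ in this common domain and $|v|\le R$, stationarity gives
\begin{align*}
P(v\mid u)=\frac{P_{|u|+|v|}(uv)}{P_{|u|}(u)},
\end{align*}
where $uv$ denotes concatenation. Since $|u|+|v|\le L+R$, both numerator and denominator agree for $P$ and $Q$. Hence $C_{L,R}(P)=C_{L,R}(Q)$.

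For ($\Rightarrow$), assume the labeled tables coincide. The main step is to recover the unconditional marginals $P_\ell$ for $1\le\ell\le L$ from the table, because the table carries only conditional rows and the length-$L$ block must be rebuilt from them. I use induction on $\ell$.

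First, $\X^1\cap\mathcal{U}_L$ is the support of $P_1$. The table gives $P(x\mid u)$ for every single-symbol row $u$ and every $|v|=1$, since $R\ge1$. Stationarity gives the balance relation
\begin{align*}
P_1(x)=\sum_{y}P_1(y)\,P(x\mid y),
\end{align*}
so $P_1$ is a stationary vector of the stochastic matrix $M_{yx}=P(x\mid y)$ on the support. That vector need not be unique, since ergodicity of the process does not obviously make $M$ irreducible. Uniqueness is therefore the obstacle, and I would avoid it as follows.

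Instead of stationarity equations, use the length-$1$ rows together with the domain of longer rows. For $L\ge2$, the support of $P_2$ is visible as $\X^2\cap\mathcal{U}_L$. To handle the case $L=1$ directly, I would fall back on ergodicity. If $M$ were reducible on the support of $P_1$, the process would have a closed subset of symbols it never leaves. That contradicts the fact that every symbol in the support recurs, by ergodicity and Poincaré recurrence. More carefully, a closed class $C$ of $M$ with positive $P$-mass is an invariant event $\{X_t\in C \text{ for all large } t\}$. Under stationarity it is shift-invariant modulo null sets and has probability strictly between $0$ and $1$ unless $C$ is the whole support. So $M$ is irreducible on the support, and Perron–Frobenius makes $P_1=Q_1$.

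Then I build up the block law by chaining: $P_{\ell+1}(ux)=P_\ell(u)P(x\mid u)$ for $|u|=\ell\le L$, with $P_{\ell+1}(ux)=0$ when $u\notin\mathcal{U}_L$. This recovers $P_{L+1}$ from the rows of length $L$. Finally,
\begin{align*}
P_{L+R}(uv)=P_L(u)\,P(v\mid u)\qquad\text{for } |u|=L,\ |v|=R,
\end{align*}
gives $P_{L+R}=Q_{L+R}$.

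The step I expect to need the most care is the irreducibility argument identifying $P_1$ from the conditional rows. Everything after it is routine marginalization and stationarity.
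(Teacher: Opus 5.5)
Your proof is correct and follows essentially the same route as the paper: ratios of block probabilities for $(\Leftarrow)$; for $(\Rightarrow)$, irreducibility of the one-step matrix on the single-symbol support via ergodicity, then uniqueness of its stationary vector and the chain rule. Your invariant event $\{X_t\in C \text{ for all large } t\}$ stands in for the paper's no-inflow computation from $p=pK$, and its probability is $<1$ because, by stationarity, it equals $P(X_t\in C\ \forall t\ge0)\le P_1(C)<1$. Drop the aside about longer rows for $L\ge2$: the nonergodic mixture in Remark~\ref{rem:table-assumptions} has identical tables for every $L$, so ergodicity is needed for all $L$ --- and your irreducibility argument indeed does not depend on $L$.
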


\begin{proof}
Suppose first that $P_{L+R}=Q_{L+R}$. Marginalization gives equality of all shorter contiguous block laws. By stationarity, for $|u|\le L$ and $|v|\le R$,
\begin{align*}
  P(v\mid u)=\frac{P_{|u|+|v|}(uv)}{P_{|u|}(u)}
\end{align*}
whenever the denominator is positive, and the same formula holds for $Q$. The positive-probability row domains and all table entries therefore agree. This direction uses stationarity but not ergodicity.

Conversely, suppose that the labeled tables agree. Let
\begin{align*}
  A_+=\{x\in\X:P(X_0=x)>0\}.
\end{align*}
The row domain for words of length one determines $A_+$. The columns indexed by single symbols determine the common stochastic matrix
\begin{align*}
  K(x,y)=P(X_1=y\mid X_0=x),\qquad x,y\in A_+.
\end{align*}
If $p(x)=P(X_0=x)$, stationarity gives $p=pK$.

We claim that $K$ is irreducible on $A_+$. Otherwise, the finite support graph has a nonempty proper closed communicating class $D\subsetneq A_+$. Since $p=pK$ and $p$ is positive on $A_+$,
\begin{align*}
  p(D)=p(D)+\sum_{x\notin D}p(x)K(x,D),
\end{align*}
so there is no positive flow into $D$ either. Therefore $\one\{X_t\in D\}=\one\{X_{t+1}\in D\}$ almost surely. The event that all coordinates lie in $D$ agrees modulo a null set with $\{X_0\in D\}$, is shift-invariant, and has probability strictly between zero and one. This contradicts ergodicity. Hence $K$ has a unique stationary distribution, and the common table determines the common single-symbol marginal $p$.

For a positive-probability word $u=x_1\cdots x_m$, $m\le L$, the chain rule gives
\begin{align*}
  P(u)=p(x_1)\prod_{j=1}^{m-1}P(x_{j+1}\mid x_1\cdots x_j).
\end{align*}
Every factor occurs in the common table. Words absent from the common row domain have probability zero under both laws. Thus $P(u)=Q(u)$ for $|u|\le L$. Finally, write $w=uv$ with $|u|=L$ and $|v|=R$. If $P(u)=Q(u)=0$, then both probabilities of $w$ vanish. Otherwise,
\begin{align*}
  P(w)=P(u)P(v\mid u)=Q(u)Q(v\mid u)=Q(w).
\end{align*}
Thus $P_{L+R}=Q_{L+R}$.
\end{proof}

Proposition~\ref{prop:table-block} is an equivalence between two summary types, not an identifiability result by itself. At matching horizons it transfers either uniform identifiability or pairwise finite distinguishability between conditional tables and block laws.

\begin{remark}\label{rem:table-assumptions}
Ergodicity is needed in the direction from tables to block laws. For $0<\theta<1$, consider the stationary nonergodic process law
\begin{align*}
  P_\theta=\theta\delta_{0^{\mathbb Z}}+(1-\theta)\delta_{1^{\mathbb Z}}.
\end{align*}
Thus $P_\theta$ generates the sequence $\dots000\dots$ with probability $\theta$ and the sequence $\dots111\dots$ with probability $1-\theta$. The processes $P_\theta$ have identical labeled conditional tables for all $\theta$ but different marginal weights. Stationarity aligns past and forward blocks and supplies $p=pK$. Finally, Proposition~\ref{prop:table-block} does not apply to an unlabeled row cloud, a multiset of rows, or a convex hull. Preservation of word labels and of the domain of positive-probability words is essential.
\end{remark}

For fixed $R\ge1$, consider the \emph{fixed-$R$ labeled-table sequence}
\begin{equation*}
  \bigl(C_{L,R}(P)\bigr)_{L\ge1}.
\end{equation*}
Each row retains its history-word label, and each table retains its positive-probability row domain. Consequently, tables with unbounded past horizons recover block laws of unbounded length. We record this elementary completeness fact to delimit the finite-horizon problem. Equality along an unbounded set of past horizons already determines any stationary ergodic finite-alphabet process, independently of a hidden presentation.

\begin{corollary}[Completeness of unbounded labeled tables]\label{cor:labeled-tables}
Under the assumptions of Proposition~\ref{prop:table-block}, if $C_{L,R}(P)=C_{L,R}(Q)$ for arbitrarily large $L$ and one fixed $R\ge1$, then $P=Q$. No assumption about hidden states, synchronization, or unifilarity is needed.
\end{corollary}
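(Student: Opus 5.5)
Fix one $R\ge1$ and choose an unbounded set $\mathcal L\subseteq\mathbb N$ of past horizons with $C_{L,R}(P)=C_{L,R}(Q)$ for every $L\in\mathcal L$. The plan is to pass through block laws using Proposition~\ref{prop:table-block} and then invoke the Kolmogorov extension/uniqueness theorem. No hidden-state structure will enter the argument.

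First, for each $L\in\mathcal L$, Proposition~\ref{prop:table-block} applies because $P$ and $Q$ are stationary ergodic on the same finite alphabet and $L,R\ge1$. It gives $P_{L+R}=Q_{L+R}$. Next, take any $n\ge0$. Because $\mathcal L$ is unbounded, we can pick $L\in\mathcal L$ with $L+R\ge n$. Marginalizing $P_{L+R}=Q_{L+R}$ onto the first $n$ coordinates yields $P_n=Q_n$. Hence $P_n=Q_n$ for all $n$.

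Second, I will upgrade equality of all contiguous block laws $P(X_1^n=\cdot)$ to equality of all finite-dimensional distributions of the bi-infinite process. By stationarity, for any $i\le j$ the law of $X_i^j$ equals the law of $X_1^{j-i+1}$. It is therefore $P_{j-i+1}=Q_{j-i+1}$. Any finite set of coordinates sits inside some contiguous window $X_i^j$, so its joint law is a marginal of the window law and agrees under $P$ and $Q$. The cylinder sets form a $\pi$-system generating the product $\sigma$-algebra on $\X^{\mathbb Z}$. By the $\pi$–$\lambda$ uniqueness theorem, we conclude $P=Q$.

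There is no real obstacle here. The one point worth stating explicitly is that ergodicity is used only through Proposition~\ref{prop:table-block}, and Remark~\ref{rem:table-assumptions} shows this use cannot be dropped. I will also note that the argument never refers to generators. This justifies the closing sentence of the corollary: it needs no assumption about hidden states, synchronization, or unifilarity.
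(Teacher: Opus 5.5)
Your proposal is correct and follows the paper's proof essentially step for step: apply Proposition~\ref{prop:table-block} to each common table, marginalize to obtain $P_n=Q_n$ for every $n$, and use stationarity to pass from contiguous block laws to the full law. Your explicit $\pi$--$\lambda$ uniqueness argument on cylinder sets spells out what the paper compresses into ``these contiguous block laws determine all finite-dimensional distributions.''
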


\begin{proof}
For any $n\ge1$, choose one of the common tables with $L+R\ge n$. Proposition~\ref{prop:table-block} gives $P_{L+R}=Q_{L+R}$, and marginalization gives $P_n=Q_n$. By stationarity, these contiguous block laws determine all finite-dimensional distributions, so $P=Q$.
\end{proof}

Corollary~\ref{cor:labeled-tables} uses an infinite sequence of finite summaries. It is therefore a completeness statement, not one of the two finite notions in Definition~\ref{def:identifiability}.

We compare two general types of sufficient finite-horizon bound. A \emph{classical linear-realization bound} has a horizon determined only by dimensions of linear representations. Applied directly to HMM presentations, it has a state-count form. Applied to smaller unrestricted real linear realizations, it has a rank-dimension form. It does not require unifilarity or synchronization. A \emph{structural synchronization bound} instead has a horizon determined by the target synchronization and predictive separation radii of exact finite-state generator $\eps$-machines. The process-adapted Hankel construction and reconstruction under known context structure in Section~\ref{sec:context} give sharper subclass-specific horizons. They are refinements, not additional general bound types.

We first state the classical linear-realization bound. An edge-emitting HMM with $r$ states has symbol matrices $T^{(x)}\in\mathbb R^{r\times r}$, an initial row distribution $\pi$, and
\begin{equation}\label{eq:linear-word-probability}
  P(w)=\pi T^{(w)}\one_r.
\end{equation}
No unifilarity or minimality is required. The \emph{process Hankel matrix} is the infinite matrix
\begin{equation}\label{eq:hankel}
  H_{u,v}=P(uv),\qquad u,v\in\X^*.
\end{equation}
Its rank is at most the dimension of every linear representation of the form \eqref{eq:linear-word-probability}. Finite-rank series and their unrestricted real linear realizations are classical \cite{Carlyle_Paz_1971,Jaeger_2000,Huang_Ge_Kakade_Dahleh_2016}. Entrywise nonnegativity is an additional positive realization constraint and is not implied by finite Hankel rank alone \cite{Vidyasagar_2011}.

\begin{proposition}[Classical linear-realization bound]\label{prop:finite-equivalence}
Let $P$ and $Q$ have edge-emitting finite HMM presentations with $r$ and $s$ states. If
\begin{align*}
  P(w)=Q(w)\qquad\text{for every }|w|\le r+s-1,
\end{align*}
then $P(w)=Q(w)$ for every finite word $w$.
\end{proposition}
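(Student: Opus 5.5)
The plan is the standard direct-sum argument of Paz, using the same Krylov-space stabilization as in the proof of Proposition~\ref{prop:radius-bounds}. Let $P$ have presentation $(\pi,\{T^{(x)}\})$ with $r$ states and $Q$ have $(\pi',\{T'^{(x)}\})$ with $s$ states, both satisfying \eqref{eq:linear-word-probability}. Form the block-diagonal symbol matrices
\begin{align*}
  M^{(x)}=\begin{pmatrix}T^{(x)}&0\\0&T'^{(x)}\end{pmatrix}\in\mathbb R^{(r+s)\times(r+s)},
  \qquad
  c=\begin{pmatrix}\one_r\\ \one_s\end{pmatrix},
  \qquad
  \rho=(\pi,\,-\pi').
\end{align*}
Then $\rho M^{(w)}c=P(w)-Q(w)$ for every word $w$, with $M^{(w)}$ defined multiplicatively as for $T^{(w)}$. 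The claim becomes: if the linear functional $z\mapsto\rho z$ vanishes on $M^{(w)}c$ for all $|w|\le r+s-1$, then it vanishes for all $w$.

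Define $V_j=\operatorname{span}\{M^{(w)}c:|w|\le j\}\subseteq\mathbb R^{r+s}$. The spaces increase with $j$, and $V_{j+1}=V_j+\sum_x M^{(x)}V_j$. If $V_{j+1}=V_j$, then every $M^{(x)}$ maps $V_j$ into itself, so $V_{j+2}=V_{j+1}$ and the chain has stabilized permanently. Hence the dimensions strictly increase until stabilization.

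The main obstacle is getting the length $r+s-1$ rather than $r+s$. A naive count gives $\dim V_0\ge1$ and at most $r+s-1$ strict increases, so stabilization by $V_{r+s-1}$; this already yields the bound. The count is valid because $c\neq0$, so $\dim V_0=1$. Since dimension is at most $r+s$, at most $r+s-1$ strict increments can occur, and therefore $V_{r+s-1}=V_{r+s}=\cdots$. Consequently every $M^{(w)}c$ lies in $V_{r+s-1}$.

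To conclude, note that $\rho$ annihilates the spanning vectors of $V_{r+s-1}$ by hypothesis, hence all of $V_{r+s-1}$, hence every $M^{(w)}c$. This gives $P(w)=Q(w)$ for all finite words. Stationarity, unifilarity and nonnegativity are never used; only the linear form \eqref{eq:linear-word-probability} is needed. The rank-dimension form follows by the same argument applied to minimal real linear realizations of dimensions equal to the Hankel ranks.
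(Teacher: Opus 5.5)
Your proof is correct and is essentially the paper's argument: the same block-diagonal difference representation $\rho M^{(w)}c=P(w)-Q(w)$, and the same stabilization of an increasing span that starts in dimension one inside $\mathbb R^{r+s}$. The only difference is a harmless dualization: you stabilize the column span of the vectors $M^{(w)}c$ and let $\rho$ annihilate it, whereas the paper stabilizes the row span of the vectors $\gamma M^{(w)}$ and checks that they annihilate $\eta$; both give the $r+s-1$ bound.
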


\begin{proof}
Set $\nu=r+s$ and form the block-diagonal difference representation
\begin{align*}
  \gamma=(\pi_P,-\pi_Q),\qquad
  M^{(x)}=
  \begin{pmatrix}T_P^{(x)}&0\\0&T_Q^{(x)}\end{pmatrix},\qquad
  \eta=\begin{pmatrix}\one_r\\\one_s\end{pmatrix}.
\end{align*}
Then $P(w)-Q(w)=\gamma M^{(w)}\eta$. For $j\ge0$, let
\begin{align*}
  V_j=\operatorname{span}\{\gamma M^{(w)}:|w|\le j\}
  \subseteq\mathbb R^{1\times \nu}.
\end{align*}
The spaces increase with $j$. Once $V_{j+1}=V_j$, right multiplication by every $M^{(x)}$ leaves $V_j$ invariant, and the sequence has stabilized permanently. Since $\dim V_0=1$ and the ambient dimension is $\nu$, stabilization occurs by $j=\nu-1$. Agreement through length $\nu-1$ says that every spanning row of $V_{\nu-1}$ annihilates $\eta$. Every reachable row therefore annihilates $\eta$, so $\gamma M^{(w)}\eta=0$ for all $w$.
\end{proof}

The $r+s-1$ finite-witness bound is due to Paz \cite{Paz_1971}, while stabilization of a reachable span is the mechanism used in Tzeng's polynomial-time equivalence procedure \cite{Tzeng_1992}. It also makes clear that the statement is algebraic. Irreducibility, ergodicity, synchronization, and nonnegativity beyond the original HMM representations play no role in the proof.

\begin{corollary}[Pairwise finite distinguishability from block laws]\label{cor:universal-block}
If $P_n=Q_n$ for one $n\ge r+s-1$, then the two HMM presentations generate the same process law. If both presentations are finite-state generator $\eps$-machines, then $G_P\cong G_Q$.
\end{corollary}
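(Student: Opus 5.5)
The argument has two steps. First, $P_n$ determines every shorter block law. Second, Proposition~\ref{prop:finite-equivalence} upgrades agreement through length $r+s-1$ to agreement on all finite words. The isomorphism then follows from the equivalence theorem for history and generator presentations recalled in Section~\ref{sec:setting}.

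\emph{Step 1 (marginalization).} For $w\in\X^k$ with $k\le n$, summing over continuations and using stationarity gives
\begin{align*}
  P_k(w)=\sum_{z\in\X^{n-k}}P_n(wz).
\end{align*}
The same formula holds for $Q$. Hence $P_n=Q_n$ implies $P_k=Q_k$ for every $k\le n$. Since $n\ge r+s-1$, we get $P(w)=Q(w)$ for all $|w|\le r+s-1$.

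\emph{Step 2 (finite witness).} Apply Proposition~\ref{prop:finite-equivalence} to the two edge-emitting presentations with $r$ and $s$ states. It gives $P(w)=Q(w)$ for every finite word $w$. By stationarity, the probabilities of contiguous words determine all finite-dimensional cylinder probabilities. The two process laws therefore coincide as measures on $\X^{\mathbb Z}$, which is the first assertion.

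\emph{Step 3 (isomorphism).} Assume both presentations are finite-state generator $\eps$-machines, so each observable process is stationary ergodic, as shown via the labeled-edge chain in Section~\ref{sec:setting}. By the Travers--Crutchfield equivalence theorem, each generator is isomorphic to the history $\eps$-machine of its process. The history machine is built from conditional future laws given pasts, so it depends only on the process law. Equal laws therefore give isomorphic history machines, and composing the isomorphisms yields $G_P\cong G_Q$.

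Steps 1 and 2 are routine. The only delicate point is in Step 3: the isomorphism must respect $\pi$, $e$, and $\delta$ as in \eqref{eq:iso-pi}--\eqref{eq:iso-transition}, and the history construction is defined only modulo null histories. Both points are handled by the cited equivalence theorem, which the paper takes as given. I would state explicitly that we invoke it there rather than reprove it.
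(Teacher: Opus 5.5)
Your proposal is correct and follows the paper's proof essentially step for step. Prefix marginalization gives agreement on all words of length at most $r+s-1$; Proposition~\ref{prop:finite-equivalence} together with stationarity gives equality of the two-sided laws; and the Travers--Crutchfield equivalence between history and generator presentations yields $G_P\cong G_Q$. One minor remark: the identity $P_k(w)=\sum_{z\in\X^{n-k}}P_n(wz)$ does not actually need stationarity, since both blocks start at the same time index.
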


\begin{proof}
For every word $w$ of length at most $r+s-1$, prefix marginalization gives
\begin{align*}
  P(w)=\sum_{z\in\X^{n-|w|}}P_n(wz)
    =\sum_{z\in\X^{n-|w|}}Q_n(wz)=Q(w).
\end{align*}
Proposition~\ref{prop:finite-equivalence} gives equality of all word probabilities. For stationary processes these determine the two-sided law. Canonical generator isomorphism then follows from the equivalence between history and generator presentations \cite{Travers_Crutchfield_2025}.
\end{proof}

On the class of finite-state generator $\eps$-machines with no uniform dimension restriction, Corollary~\ref{cor:universal-block} gives pairwise finite distinguishability in the sense of Definition~\ref{def:identifiability}: the horizon $r+s-1$ depends on the two presentation dimensions. If $P$ and $Q$ possess smaller linear realizations of dimensions $r^\prime\le r$ and $s^\prime\le s$, the same proof improves the classical linear-realization bound to $r^\prime+s^\prime-1$. Consequently, the raw number of hidden states is not always the right linear dimension.

\begin{corollary}[Uniform identifiability under bounded state count]\label{cor:state-count-class}
For $N\ge1$, let $\Theta_N$ be the class of stationary processes whose finite-state generator $\eps$-machines have at most $N$ states. Then the fixed block law $P_{2N-1}$ gives uniform identifiability on $\Theta_N$.
\end{corollary}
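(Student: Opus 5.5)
The argument reduces directly to Corollary~\ref{cor:universal-block}, taking $h=2N-1\in\mathcal H=\mathbb N_0$ as the single horizon for all of $\Theta_N$. This horizon is fixed before the pair $P,Q\in\Theta_N$ is chosen, as the quantifier order in \eqref{eq:uniform-identifiability} requires.

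Let $P,Q\in\Theta_N$ satisfy $P_{2N-1}=Q_{2N-1}$. Their generator $\eps$-machines $G_P$ and $G_Q$ have $r\le N$ and $s\le N$ states. Each generator is an edge-emitting finite HMM presentation with symbol matrices \eqref{eq:symbol-matrix} and initial row $\pi$. Its word probabilities take the form \eqref{eq:linear-word-probability}, since stationarity gives $P(w)=\pi T^{(w)}\one$.

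Because $r+s-1\le 2N-1$, the block law of length $n=2N-1$ satisfies the hypothesis $n\ge r+s-1$ of Corollary~\ref{cor:universal-block}. That corollary first reduces by prefix marginalization to agreement on all words of length at most $r+s-1$, then applies Proposition~\ref{prop:finite-equivalence}. It concludes that the two presentations generate the same process law, and hence, via the history–generator equivalence, that $G_P\cong G_Q$.

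Only two bookkeeping points need care. First, $2N-1\ge r+s-1$ must hold uniformly over the class, which is exactly where the bound $N$ on the state count enters. Second, the edge case $N=1$ must be checked. There $P_1$ determines the single emission law, and with one state it determines the generator up to the trivial relabeling, in agreement with the general argument since $r+s-1=1$.

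I do not expect a genuine obstacle. The substantive work is already contained in Proposition~\ref{prop:finite-equivalence} and in the cited equivalence theorem.
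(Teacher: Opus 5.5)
Your proposal is correct and follows the paper's proof: you fix the horizon $2N-1$ before the pair is chosen, use $r+s-1\le 2N-1$, and invoke Corollary~\ref{cor:universal-block}, which itself handles the marginalization to the shorter horizon and the passage to generator isomorphism. Your separate check of $N=1$ is harmless but unnecessary, since the general argument already covers it.
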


\begin{proof}
For $P,Q\in\Theta_N$, their state counts satisfy $r+s-1\le2N-1$. Corollary~\ref{cor:universal-block} applies after marginalization to the required shorter horizon.
\end{proof}

\begin{example}[Reconstruction under known reset structure]\label{ex:reset-table}
Suppose it is known in advance that the state set is $\{A,B,C\}$ and that symbols $0,1,2$ send every state from which they are admissible to $A,B,C$, respectively. Within this known reset structure, consider the Ternary Reset Process of Figure~\ref{fig:sync-ternary}, whose entire table $C_{1,1}$ is
\begin{equation*}
\begin{array}{cccc}
\toprule
u & P(0\mid u)&P(1\mid u)&P(2\mid u)\\
\midrule
0&1/2&1/4&1/4\\
1&1/4&1/2&1/4\\
2&1/4&1/4&1/2\\
\bottomrule
\end{array}.
\end{equation*}
The known reset rule makes each row the one-symbol distribution of the corresponding state. The three distinct rows therefore give the emissions, and after emitting $x$ the successor is the state corresponding to the row labeled $x$. The resulting state-transition matrix is doubly stochastic, so $\pi=(1/3,1/3,1/3)$. Thus the table recovers the parameters within the known reset structure. The table alone does not certify that its history labels are synchronizing words, so this is not identification of the structure from the table. Theorem~\ref{thm:pairwise} below does not assume that synchronizing rows can be recognized directly from an arbitrary table.
\end{example}

\section{A structural synchronization horizon}\label{sec:main}

Let $P$ and $Q$ be generated by exact finite-state generator $\eps$-machines $G_P$ and $G_Q$ over the same finite alphabet $\X$. Set
\begin{equation}\label{eq:abd}
  a=L_{\mathrm{tar}}(G_P),\qquad
  b=L_{\mathrm{tar}}(G_Q),\qquad
  d=\max\{L_{\mathrm{sep}}(G_P),L_{\mathrm{sep}}(G_Q)\}.
\end{equation}
The proof follows the classical steps of reaching states, checking successors, and distinguishing states \cite{Chow_1978,Kocsis_Rot_2025}. A history that synchronizes one generator may leave a posterior mixture of states in the other. The following lemma constructs a common history that reaches a definite state in both generators.

\begin{lemma}[Simultaneous synchronization]\label{lem:simultaneous}
Suppose that, for every $0\le \ell\le a+b$,
\begin{align*}
  \{w\in\X^\ell:P_\ell(w)>0\}
  =\{w\in\X^\ell:Q_\ell(w)>0\}.
\end{align*}
For every state $\sigma$ of $G_P$, there is a positive-probability word $w_\sigma$ of length at most $a+b$ that synchronizes $G_P$ to $\sigma$ and simultaneously synchronizes $G_Q$ to some state. The symmetric statement holds for every state of $G_Q$.
\end{lemma}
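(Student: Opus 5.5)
The plan is to build $w_\sigma$ as a concatenation $u v$. Here $v$ is a short word that synchronizes $G_P$ to $\sigma$, and $u$ is a short prefix that synchronizes $G_Q$ to a state from which $v$ is admissible in $G_Q$. Equality of supports through length $a+b$ is what lets us pass positivity of $uv$ between the two models.

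First I fix a positive-probability word $v$ with $|v|\le a$ that synchronizes $G_P$ to $\sigma$. It exists by the definition of $a=L_{\mathrm{tar}}(G_P)$, which is finite by exactness. Since $|v|\le a\le a+b$, the support hypothesis gives $Q_{|v|}(v)>0$. Hence there is a state $\rho$ of $G_Q$ with $p'_\rho(v)>0$, where primes denote quantities of $G_Q$.

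Next I choose a word $u$ with $|u|\le b$ that synchronizes $G_Q$ to $\rho$. In the one-state case this is $u=\lambda$, consistent with the convention $L_{\mathrm{tar}}=0$. I then show $Q(uv)>0$. By Lemma~\ref{lem:word-characterization}, every state $\tau$ from which $u$ is admissible satisfies $\delta'(\tau,u)=\rho$. By the Markov property,
\begin{align*}
Q(uv)=\sum_\tau \pi'_\tau\, p'_\tau(u)\, p'_\rho(v)=Q(u)\,p'_\rho(v)>0 .
\end{align*}
Since $|uv|\le a+b$, the support hypothesis transfers this to $P(uv)>0$.

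Set $w_\sigma=uv$. It synchronizes $G_P$ to $\sigma$ by the suffix property, since $v$ synchronizes to $\sigma$ and $uv$ has positive probability under $P$. It synchronizes $G_Q$ to $\delta'(\rho,v)$ by the property of synchronization followed by a unifilar update, since $u$ synchronizes $G_Q$ to $\rho$, $v$ is admissible from $\rho$, and $Q(uv)>0$. Its length is at most $a+b$. The symmetric statement follows by exchanging the roles of $P$ and $Q$. There I take $v$ of length at most $b$ synchronizing $G_Q$ and a prefix of length at most $a$ synchronizing $G_P$, and the total length is again at most $a+b$.

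The main point of care, rather than an obstacle, is the positivity transfer. A word admissible in $G_P$ need not a priori be admissible from a suitable state of $G_Q$. The argument handles this by choosing $\rho$ only after transporting positivity of $v$ to $Q$. It then certifies $Q(uv)>0$ inside $G_Q$ before transporting back to $P$. Both transfers use lengths at most $a+b$, which is exactly the range the hypothesis provides.
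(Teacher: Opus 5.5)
Your proof is correct and follows the paper's argument: you synchronize $G_P$ with a word of length at most $a$, transfer its positivity to $Q$ to find an admitting state, prepend a $G_Q$-synchronizing word of length at most $b$ to that state, and transfer positivity of the concatenation back to $P$ before applying the suffix and unifilar-update properties. Your explicit computation $Q(uv)=Q(u)\,p'_\rho(v)$ simply writes out the step the paper states in words.
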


\begin{proof}
Choose $u_\sigma$ with $|u_\sigma|\le a$ that synchronizes $G_P$ to $\sigma$. Support equality gives $Q(u_\sigma)>0$, so some state $\tau$ of $G_Q$ admits it. Choose $z_\tau$ with $|z_\tau|\le b$ synchronizing $G_Q$ to $\tau$. The word
\begin{align*}
  w_\sigma=z_\tau u_\sigma
\end{align*}
has positive probability under $Q$. After $z_\tau$, the machine is at $\tau$, from which $u_\sigma$ is admissible. Since $|w_\sigma|\le a+b$, support equality gives it positive probability under $P$. The suffix property makes $w_\sigma$ synchronize $P$ to $\sigma$. Synchronization by $z_\tau$ followed by unifilarity makes it synchronize $Q$ to $\delta_Q(\tau,u_\sigma)$. Interchanging $P$ and $Q$ proves the symmetric statement.
\end{proof}

The lemma is a pair-specific technical step rather than an identifiability statement by itself. It yields the following theorem on pairwise finite distinguishability, whose horizons depend on the radii of $G_P$ and $G_Q$.

\begin{theorem}[Pairwise finite distinguishability from a conditional table]\label{thm:pairwise}
Let $L,R\ge1$ and suppose that $C_{L,R}(P)=C_{L,R}(Q)$. Then $G_P\cong G_Q$ under either sufficient allocation of the horizons:
\begin{align}
  L&\ge a+b+1,
  &R&\ge\max\{1,d\},\tag{A}\label{eq:condition-a}\\
  L&\ge a+b,
  &R&\ge d+1.\tag{B}\label{eq:condition-b}
\end{align}
\end{theorem}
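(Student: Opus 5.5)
By Proposition~\ref{prop:table-block}, equality of the tables is equivalent to $P_{L+R}=Q_{L+R}$, and under either (A) or (B) we have $L+R\ge a+b+d+1$. So equality holds for every shorter block law, and in particular for the supports through length $a+b$. Lemma~\ref{lem:simultaneous} therefore applies.

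For each state $\sigma$ of $G_P$, fix a word $w_\sigma$ of length at most $a+b$ that synchronizes $G_P$ to $\sigma$ and $G_Q$ to some state. Call that state $f(\sigma)$. The word $w_\sigma$ lies in the common row domain, since $|w_\sigma|\le a+b\le L$. By the pure-row identity \eqref{eq:pure-row}, $P(v\mid w_\sigma)=p_\sigma(v)$ and $Q(v\mid w_\sigma)=p'_{f(\sigma)}(v)$. Table equality then gives
\begin{align*}
p_\sigma(v)=p'_{f(\sigma)}(v)\qquad\text{for all }|v|\le R.
\end{align*}
The symmetric construction gives a map $g$ from states of $G_Q$ to states of $G_P$ with the same property.

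\textbf{Well-definedness and bijectivity.} Suppose another word $w'_\sigma$ also synchronizes $G_P$ to $\sigma$ and $G_Q$ to some $\tau'$. Then $p'_{f(\sigma)}$ and $p'_{\tau'}$ agree through length $R\ge d$. Since $d\ge L_{\mathrm{sep}}(G_Q)$, this forces $\tau'=f(\sigma)$, so $f(\sigma)$ does not depend on the chosen word. For injectivity, $f(\sigma)=f(\sigma')$ makes $p_\sigma$ and $p_{\sigma'}$ agree through length $d$, so $\sigma=\sigma'$. Next, $g(f(\sigma))$ has morph agreeing with $p'_{f(\sigma)}$, hence with $p_\sigma$, through length $d$, so $g\circ f=\mathrm{id}$. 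By symmetry $f\circ g=\mathrm{id}$, and $f$ is a bijection.

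\textbf{Emissions.} Since $R\ge1$, taking $|v|=1$ gives $e'_{f(\sigma)}=e_\sigma$.

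\textbf{Transitions (the main step).} Let $(\sigma,x)$ be admissible. The word $w_\sigma x$ has positive probability under $P$, and by the unifilar-update closure property it synchronizes $G_P$ to $\delta(\sigma,x)$. Under $G_Q$ it synchronizes to $\delta'(f(\sigma),x)$; this transition is admissible because $e'_{f(\sigma)}(x)=e_\sigma(x)>0$.

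\emph{Under (A).} Here $|w_\sigma x|\le a+b+1\le L$, so $w_\sigma x$ is a common row. Comparing the two pure rows through length $R\ge d$ shows that $p_{\delta(\sigma,x)}$ agrees with $p'_{\delta'(f(\sigma),x)}$. It also agrees with $p'_{f(\delta(\sigma,x))}$ through length $R$, and $R\ge d$ separates the states of $G_Q$. Hence $\delta'(f(\sigma),x)=f(\delta(\sigma,x))$.

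\emph{Under (B).} Here $w_\sigma x$ may exceed $L$. Instead, use the row $w_\sigma$ with columns $xv'$, $|v'|\le d\le R-1$. The Markov factorization gives $p_\sigma(xv')=e_\sigma(x)\,p_{\delta(\sigma,x)}(v')$, and similarly for $G_Q$. Dividing by the common positive factor $e_\sigma(x)$ yields agreement of the successor morphs through length $d$, and the same separation argument applies.

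\textbf{Stationary distributions.} By isomorphism of the emissions and transitions, $f$ conjugates $T$ to $T'$. Uniqueness of the stationary vector then gives $\pi'\circ f=\pi$. This step is routine.

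The delicate points are well-definedness and the horizon bookkeeping in the transition step. The first rests on the fact that separation at radius $d$ pins down states uniquely. The second is the only place where conditions (A) and (B) differ, and it is where the extra symbol must be placed either in the past (A) or in the future (B).
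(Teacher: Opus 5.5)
Your proof is correct and is essentially the paper's argument: common synchronizing words from Lemma~\ref{lem:simultaneous} match the truncated state morphs to give the bijection $f$ (your explicit inverse $g$ replaces the paper's appeal to uniqueness of the matching), and successors are recovered by appending $x$ to the history under (A) or by the normalized shift $p_\sigma(xv)/e_\sigma(x)$ under (B). One small slip: when $a=b=0$ the word $w_\sigma$ is $\lambda$, which is not a row of $C_{L,R}$ because rows have length at least one, so ``$w_\sigma$ lies in the common row domain'' fails there. The paper treats this case separately, but your block-law equality $P_{L+R}=Q_{L+R}$ already gives $p_\sigma(v)=P(v)=Q(v)=p'_{f(\sigma)}(v)$; for $a+b\ge1$ the word $w_\sigma$ is automatically nonempty, since $\lambda$ synchronizes only one-state generators.
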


\begin{proof}
\emph{State alignment.}
If $a=b=0$, both machines have one state. Choose a symbol $y$ in their common single-symbol support. A generator with one state produces an iid process, and for every $x\in\X$,
\begin{align*}
  e^P(x)=P(x\mid y)=Q(x\mid y)=e^Q(x).
\end{align*}
The machines are isomorphic. Assume henceforth that $a+b\ge1$.

Both \eqref{eq:condition-a} and \eqref{eq:condition-b} imply $L\ge a+b$ and $R\ge d$. Equality of the labeled row domains, together with stationarity, gives the finite-word support equality required by Lemma~\ref{lem:simultaneous}. For every state $\sigma$ of $G_P$, choose the common synchronizing word $w_\sigma$. It is nonempty in the present case and indexes a table row. If it ends at $\tau$ in $Q$, the identity for pure rows and equality of the table give
\begin{align*}
  p_\sigma^P(v)=P(v\mid w_\sigma)=Q(v\mid w_\sigma)=p_\tau^Q(v),
  \qquad |v|\le R.
\end{align*}
Applying the symmetric half of the lemma from every state of $G_Q$ shows that the two sets of state morphs truncated at horizon $R$ coincide. Since $R\ge d$, the truncated state morphs are pairwise distinct within both machines. They define a unique bijection
\begin{align*}
  f:\Sset_P\longrightarrow\Sset_Q,
  \qquad
  p_\sigma^P(v)=p_{f(\sigma)}^Q(v),\quad |v|\le R.
\end{align*}
By uniqueness of this matching, the state reached by $w_\sigma$ in $Q$ is $f(\sigma)$. At words of length one this gives
\begin{equation}\label{eq:matched-emissions}
  e_\sigma^P(x)=e_{f(\sigma)}^Q(x),
\end{equation}
so admissible state-symbol pairs correspond.

\emph{Transitions under (A).}
Fix an admissible $(\sigma,x)$. The word $w_\sigma x$ has positive probability under both laws, has length at most $a+b+1\le L$, and synchronizes the two machines to $\delta_P(\sigma,x)$ and $\delta_Q(f(\sigma),x)$, respectively. Its common row shows that these successor states have equal state morphs truncated at horizon $R$. By uniqueness of the matching of state morphs,
\begin{equation}\label{eq:successor-isomorphism}
  f(\delta_P(\sigma,x))=\delta_Q(f(\sigma),x).
\end{equation}

\emph{Transitions under (B).}
For $|v|\le R-1$, unifilarity and \eqref{eq:matched-emissions} give
\begin{align*}
  p_{\delta_P(\sigma,x)}^P(v)
  &=\frac{p_\sigma^P(xv)}{e_\sigma^P(x)}
   =\frac{p_{f(\sigma)}^Q(xv)}{e_{f(\sigma)}^Q(x)}\\
  &=p_{\delta_Q(f(\sigma),x)}^Q(v).
\end{align*}
The state $f(\delta_P(\sigma,x))$ has the same state morph under $Q$ as $\delta_P(\sigma,x)$ through length $R$. Hence it and $\delta_Q(f(\sigma),x)$ agree through length $R-1\ge L_{\mathrm{sep}}(G_Q)$ and must be the same state. Thus \eqref{eq:successor-isomorphism} holds under (B) as well.

Under either allocation, $f$ preserves all emissions and admissible labeled transitions. The state-transition matrices therefore agree up to the permutation induced by $f$. Their unique stationary distributions correspond, proving $G_P\cong G_Q$.
\end{proof}

Theorem~\ref{thm:pairwise} establishes pairwise finite distinguishability, not uniform identifiability on the unbounded exact class. Its horizons and common synchronizing words depend on both generators through their radii and Lemma~\ref{lem:simultaneous}. The proof does not provide a procedure for recognizing the synchronizing words from the table without further structure.

\begin{corollary}[Pairwise structural synchronization block bound]\label{cor:structural-block}
Under the assumptions of Theorem~\ref{thm:pairwise}, equality of a block law of length
\begin{equation}\label{eq:structural-horizon}
  n\ge a+b+d+1
\end{equation}
implies $G_P\cong G_Q$.
\end{corollary}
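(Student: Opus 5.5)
The plan is to reduce the block-law statement to Theorem~\ref{thm:pairwise} through Proposition~\ref{prop:table-block}. Take $n\ge a+b+d+1$ with $P_n=Q_n$. Prefix marginalization, as in the proof of Corollary~\ref{cor:universal-block}, gives $P_m=Q_m$ for every $m\le n$. In particular $P_{a+b+d+1}=Q_{a+b+d+1}$. We therefore only need to choose $L,R\ge1$ with $L+R=a+b+d+1$ that satisfy one of the allocations \eqref{eq:condition-a} or \eqref{eq:condition-b}. Then Proposition~\ref{prop:table-block} converts equality of the block law of length $L+R$ into $C_{L,R}(P)=C_{L,R}(Q)$, since both processes are stationary ergodic as factors of irreducible labeled-edge chains (Section~\ref{sec:setting}). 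Theorem~\ref{thm:pairwise} then yields $G_P\cong G_Q$.

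The choice splits into cases by the radii. If $a+b\ge1$, take allocation \eqref{eq:condition-b} with $L=a+b\ge1$ and $R=d+1\ge1$; the sum is exactly $a+b+d+1$. If $a=b=0$, both machines have one state. Then $d=0$ by the one-state convention, and the required horizon is $n\ge1$. Allocation \eqref{eq:condition-a} would need $L\ge1$ and $R\ge1$, hence $L+R\ge2$, which exceeds the available length. So this case is treated directly: each one-state generator emits an iid process with law $e^P$ or $e^Q$, and $P_1=Q_1$ gives $e^P=e^Q$, hence the isomorphism. The stationary distributions are trivially equal.

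The only real obstacle is this bookkeeping in the degenerate corner. Proposition~\ref{prop:table-block} requires $L,R\ge1$, while the horizon $a+b+d+1$ can be as small as $1$. Allocation \eqref{eq:condition-b} is the one that fits exactly whenever $a+b\ge1$, and the one-state case must be settled separately as above. No further verification is needed, because Lemma~\ref{lem:simultaneous} is already invoked inside Theorem~\ref{thm:pairwise}, and its support-equality hypothesis follows from equality of the row domains there.
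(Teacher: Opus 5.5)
Your proposal is correct and matches the paper's proof essentially step for step. The one-state case ($a=b=0$, equivalently $a=b=d=0$) is settled directly from $P_1=Q_1$. Otherwise you take $L=a+b$ and $R=d+1$ in allocation \eqref{eq:condition-b}, pass from the block law to the conditional table via Proposition~\ref{prop:table-block}, and reduce longer horizons to $a+b+d+1$ by marginalization.
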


\begin{proof}
If $a=b=d=0$, both generators have one state and the block law of length one identifies their common emission distribution. Otherwise take $L=a+b$ and $R=d+1$ in condition (B). Proposition~\ref{prop:table-block} converts equality of the block law of length $L+R$ to equality of the conditional table. Equality at any longer horizon implies equality at $L+R$ by marginalization.
\end{proof}

Thus Corollary~\ref{cor:structural-block} is the block-law form of pairwise finite distinguishability: its structural synchronization horizon depends on the two generators.

\begin{corollary}[Uniform identifiability under bounded structural radii]\label{cor:fixed-class}
Fix $A,D\in\mathbb N_0$ and a finite alphabet $\X$, and let
\begin{align*}
\Theta_{\mathrm{ex}}(A,D)
=\bigl\{P:L_{\mathrm{tar}}(G_P)\le A,\ L_{\mathrm{sep}}(G_P)\le D,
\quad G_P\text{ is an exact finite-state generator $\eps$-machine}\bigr\}.
\end{align*}
Subject to $L,R\ge1$, each of
\begin{align*}
  C_{2A+1,\max\{1,D\}},
  \qquad
  C_{\max\{1,2A\},D+1}
\end{align*}
gives uniform identifiability on $\Theta_{\mathrm{ex}}(A,D)$. The block law $P_{2A+D+1}$ also gives uniform identifiability on this class.
\end{corollary}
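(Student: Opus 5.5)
The corollary is a direct specialization of Theorem~\ref{thm:pairwise} and Corollary~\ref{cor:structural-block}, with the pair-dependent radii replaced by the class bounds. Fix $P,Q\in\Theta_{\mathrm{ex}}(A,D)$ and let $a,b,d$ be as in \eqref{eq:abd}. By the definition of the class, $a\le A$, $b\le A$ and $d\le D$. The horizons $2A+1$, $\max\{1,D\}$, $\max\{1,2A\}$, $D+1$ and $2A+D+1$ are then fixed before the pair is chosen, which gives the quantifier order of \eqref{eq:uniform-identifiability}. What remains is to check that each fixed horizon satisfies the pairwise hypotheses for every such pair.

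For $C_{2A+1,\max\{1,D\}}$ I would verify condition \eqref{eq:condition-a}. Here $L=2A+1\ge a+b+1$ and $R=\max\{1,D\}\ge\max\{1,d\}$, so equality of the tables gives $G_P\cong G_Q$ by Theorem~\ref{thm:pairwise}.

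For $C_{\max\{1,2A\},D+1}$ I would verify condition \eqref{eq:condition-b}. Here $R=D+1\ge d+1$, and $L=\max\{1,2A\}\ge 2A\ge a+b$. The maximum with $1$ only enforces $L\ge1$ when $A=0$. In that case $a=b=0$ and both machines have one state, so condition (B) holds with $L=1\ge0$ and the theorem still applies; its one-state branch handles this case.

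For the block law, equality of $P_{2A+D+1}$ and $Q_{2A+D+1}$ gives, by marginalization, equality of block laws of every length $n\le 2A+D+1$. In particular it gives equality at length $a+b+d+1\le 2A+D+1$. Corollary~\ref{cor:structural-block} then yields isomorphism, since it allows any $n\ge a+b+d+1$.

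There is no real obstacle. The only delicate point is the degenerate case $A=0$ (or $D=0$) and the clamping of the horizons to be at least $1$. The argument must confirm that the one-state convention $L_{\mathrm{tar}}=L_{\mathrm{sep}}=0$ is consistent with the clamped horizons, and that the theorem's $a=b=0$ branch covers it. A secondary point is that exactness is part of the class definition, so the theorem's standing hypothesis is met.
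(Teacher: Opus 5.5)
Your proposal is correct and follows the paper's own proof. Both bound $a,b\le A$ and $d\le D$, check allocations (A) and (B) of Theorem~\ref{thm:pairwise} for the two conditional tables, and obtain the block-law claim from Corollary~\ref{cor:structural-block} after marginalization; your explicit handling of the degenerate case $A=0$, where $L_{\mathrm{tar}}=0$ forces a single state, is a harmless elaboration that the paper leaves implicit.
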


\begin{proof}
For any $P,Q\in\Theta_{\mathrm{ex}}(A,D)$, the quantities in \eqref{eq:abd} satisfy $a,b\le A$ and $d\le D$. The two conditional-table claims follow from the corresponding allocations in Theorem~\ref{thm:pairwise}, and the block-law claim follows from Corollary~\ref{cor:structural-block} after marginalization.
\end{proof}

This is uniform identifiability in the sense of Definition~\ref{def:identifiability}: the summary is chosen before the processes are quantified. Theorem~\ref{thm:pairwise} and Corollary~\ref{cor:structural-block} instead give pairwise finite distinguishability with horizons adapted to a given pair.

\section{Binary context machines}\label{sec:context}

We compare the structural synchronization and classical linear-realization bounds on an explicit family with full Hankel rank. The \emph{Markov order} of a stationary process is the least $k\ge0$ for which the conditional law of the next symbol given the entire past depends only on the last $k$ symbols, almost surely. For $k=0$, this law is independent of the past. If no finite $k$ exists, the Markov order is infinite.

The binary \emph{de Bruijn graph of order $m$} has vertex set $\{0,1\}^m$ and an edge labeled $x$ from $u=u_1\cdots u_m$ to $u_2\cdots u_mx$. Thus its state records the most recent $m$ symbols.

Fix $m\ge1$ and let
\begin{equation}\label{eq:context-machine}
  \Sset_m=\{0,1\}^m,
  \qquad
  \delta(u,x)=u_2\cdots u_mx,
\end{equation}
where the right-hand side is $x$ when $m=1$. Choose parameters $p_u\in(0,1)$ and set
\begin{align*}
  e_u(1)=p_u,
  \qquad
  e_u(0)=1-p_u.
\end{align*}
This is the full-support binary context presentation of order $m$. The following proposition considers the subfamily with pairwise distinct parameters.

\begin{proposition}[Radii and full Hankel rank]\label{prop:context-rank}
If the $2^m$ numbers $(p_u)_{u\in\Sset_m}$ are pairwise distinct, then \eqref{eq:context-machine} is an exact finite-state generator $\eps$-machine and
\begin{align*}
  |\Sset_m|=2^m,
  \qquad
  L_{\mathrm{tar}}=m,
  \qquad
  L_{\mathrm{sep}}=1.
\end{align*}
Its standard synchronization order and its Markov order are also $m$, and its process Hankel matrix has full rank $2^m$.
\end{proposition}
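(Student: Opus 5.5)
The plan is to check the axioms of Definition~\ref{def:generator} first, then compute the three radii and the Markov order directly from the de Bruijn structure, and finally prove full Hankel rank with a Kronecker-product factorization.

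\emph{Generator axioms and radii.} Since $0<p_u<1$, both symbols are admissible from every state, and $\delta$ is a function, so the presentation is unifilar. The de Bruijn graph is strongly connected: from $u$, reading any target word $v\in\{0,1\}^m$ leads to $v$. Hence $T$ is irreducible and $\pi$ exists and is unique. Every state $u$ has next-symbol law $(1-p_u,p_u)$, and these laws are pairwise distinct, so states are probabilistically distinct and $L_{\mathrm{sep}}\le1$. Because $m\ge1$ gives at least two states, $L_{\mathrm{sep}}=1$. Every word $w$ of length $m$ has positive probability, and $\delta(\tau,w)=w$ for every $\tau$. By Lemma~\ref{lem:word-characterization}, $w$ synchronizes to the state $w$. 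This gives exactness, $L_{\mathrm{tar}}\le m$ and $L_{\mathrm{syn}}\le m$. For the lower bounds, take a word $w$ of length $\ell<m$. It is admissible from all $2^m$ states, and the resulting states $\tau_{\ell+1}\cdots\tau_m w$ run over all $2^{m-\ell}\ge2$ extensions. So no word of length less than $m$ synchronizes, which gives $L_{\mathrm{tar}}\ge m$ and hence $L_{\mathrm{syn}}\ge m$. Markov order at most $m$ follows because $S_0=X_{-m+1}^0$ almost surely, so $P(X_1=1\mid\text{past})=p_{X_{-m+1}^0}$. For Markov order at least $m$, the conditional law given the whole past depends on the last $m$ symbols through $p_u$, and the $p_u$ are pairwise distinct. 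Two pasts that agree in their last $k<m$ symbols but differ somewhere in positions $-m+1,\dots,-k$ therefore give different conditional laws. All such pasts have positive probability, so no $k<m$ suffices.

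\emph{Hankel rank.} The upper bound $\operatorname{rank}H\le2^m$ comes from the linear representation \eqref{eq:linear-word-probability} of dimension $2^m$. For the lower bound I would restrict to the finite submatrix with rows $u\in\{0,1\}^m$ and columns $v\in\{0,1\}^m$. Using $S_0=u$ after the prefix $u$, and full support of $\pi$ together with the fact that $u$ synchronizes, we get $H_{u,v}=P(u)\,p_u(v)$. Here $P(u)>0$, so it is enough to show that the $2^m\times2^m$ matrix $M_{u,v}=p_u(v)=\prod_{j=1}^m e_{\delta(u,v_1\cdots v_{j-1})}(v_j)$ is invertible. This step is the main obstacle, because the state changes as $v$ is read, so $M$ is not simply a product of independent factors.

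My first attempt is to choose a better column basis. Replace the columns $v$ by the linear combinations that marginalize over all but one future position, or more simply use induction on $m$ with column words of varying lengths $|v|\le m$. Since rank can only grow when columns are added, I may use all columns $v$ with $|v|\le m$. I would then show by induction that the row vectors $(p_u(v))_{|v|\le m}$, indexed by $u$, are linearly independent. Suppose $\sum_u c_u p_u(\cdot)=0$. Testing on $v$ and on $v1$ and using unifilarity gives $\sum_u c_u p_u(v)\,p_{\delta(u,v)}=0$ alongside $\sum_u c_u p_u(v)=0$. Taking $v$ of length $m$ sends each $u$ to $\delta(u,v)=v$, which carries no information. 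Instead I would take $v$ of length $m-1$. Then $\delta(u,v)=u_m v$, so the states split into two classes according to $u_m$, and the two equations force the weighted class sums to vanish because $p_{0v}\ne p_{1v}$. Iterating over decreasing lengths, in the style of the span-stabilization argument of Proposition~\ref{prop:radius-bounds}, should peel off one coordinate at a time and force every $c_u=0$.

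If this peeling argument becomes messy, my fallback is a dimension count on the reachable row span $W_j$ from Proposition~\ref{prop:radius-bounds}. I would show that $\dim W_{j+1}>\dim W_j$ until $\dim W_j=2^m$, using the distinctness of the $p_u$ to exhibit a new direction at each step. Then I would transfer the conclusion to $H$ through the factorization $H_{u,v}=P(u)\,(T^{(v)}\one)_{\delta(\cdot,u)}$. Under this factorization the rows indexed by synchronizing prefixes $u$ pick out standard basis vectors.
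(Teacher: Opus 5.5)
Your proposal is correct. For the generator axioms, the three radii and the Markov order it is essentially the paper's argument; the two routes differ only in the Hankel lower bound.

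The paper never meets the obstacle you flag. In the de Bruijn graph the unique $m$-step path from $u$ to $v$ is labeled by $v$ itself. Hence $p_u(v)=P(S_m=v\mid S_0=u)=(T_m^m)_{u,v}$ and $H^{(m,m)}=\operatorname{diag}(P_m(u))\,T_m^m$. After permuting rows as $(0z,1z)$ and columns as $(z0,z1)$, $T_m$ is block diagonal with $2\times2$ blocks of determinant $p_{1z}-p_{0z}\ne0$.

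Your peeling induction does close, provided stage $j$ uses $v$ of length exactly $m-j-1$, so that every split is two-way. Put $\Sigma_j(y,v)=\sum_{u:\,\operatorname{suffix}_j(u)=y}c_u\,p_u(v)$ for $|y|=j$. On that class $\delta(u,v)=u_{m-j}yv$, hence
\begin{align*}
  \Sigma_j(y,v)&=\Sigma_{j+1}(0y,v)+\Sigma_{j+1}(1y,v),\\
  \Sigma_j(y,v1)&=p_{0yv}\,\Sigma_{j+1}(0y,v)+p_{1yv}\,\Sigma_{j+1}(1y,v).
\end{align*}
Here $\Sigma_j(y,v1)=0$ is the output of the previous stage (the hypothesis when $j=0$). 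Also $\Sigma_j(y,v)=\Sigma_j(y,v0)+\Sigma_j(y,v1)=0$ by marginalization. So $p_{0yv}\ne p_{1yv}$ gives $\Sigma_{j+1}(\cdot,v)=0$. At $j+1=m$ the classes are singletons and $c_u=c_u\,p_u(\lambda)=0$.

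This is an induction on suffix classes, not span stabilization. The $W_j$ fallback is unnecessary: strict growth until stabilization is automatic, and its only real content would be that the stable space is all of $\mathbb R^{2^m}$, which is exactly what the peeling proves.

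Both routes reduce to the same $2\times2$ nondegeneracy $p_{0z}\ne p_{1z}$, so both need only that weaker hypothesis. The paper's route gives a closed-form factorization and determinant $\pm\prod_z(p_{1z}-p_{0z})$, and it reuses the identity $(p_u(v))_{u,v}=T_m^m$ in its W-method and Hankel-basis discussion. Your route proves linear independence of the state morphs directly, without permutation bookkeeping. Since shorter columns are sums of length-$m$ columns, it also yields invertibility of the same square block.

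One wording fix in the Markov-order step: single infinite pasts have probability zero. Argue instead that every $m$-block has positive probability. Then an almost sure identity $p_{X_{-m+1}^0}=g(X_{-k+1}^0)$ with $k<m$ would make $p_u$ depend only on the last $k$ symbols of $u$, contradicting pairwise distinctness.
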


\begin{proof}
Every labeled edge has positive probability, and the de Bruijn graph is strongly connected. For $0\le r\le m$, let $\operatorname{suffix}_r(u)$ denote the terminal subword of $u$ of length $r$, with $\operatorname{suffix}_0(u)=\lambda$. If $w$ has length $\ell\le m$, then
\begin{equation}\label{eq:context-update}
  \delta(u,w)=\operatorname{suffix}_{m-\ell}(u)w.
\end{equation}
Every $w\in\{0,1\}^m$ therefore sends all initial states to state $w$, so it synchronizes to $w$. If $\ell<m$, choose two initial contexts having different suffixes of length $m-\ell$. Both admit $w$, but \eqref{eq:context-update} gives different endpoints. No shorter word synchronizes. Hence $L_{\mathrm{tar}}=L_{\mathrm{syn}}=m$.

For every context $u$,
\begin{align*}
  P(X_1=1\mid S_0=u)=p_u.
\end{align*}
Pairwise distinctness separates every state pair in one step, so $L_{\mathrm{sep}}=1$ and the states are probabilistically distinct. Under the stationary law,
\begin{align*}
  S_t=X_{t-m+1}^t\qquad\text{almost surely},
\end{align*}
so the output has Markov order at most $m$. An order below $m$ would imply $p_{0z}=p_{1z}$ for every $z\in\{0,1\}^{m-1}$, contrary to pairwise distinctness. Thus the Markov order is exactly $m$.

It remains to determine the Hankel rank. Let $T_m$ be the $2^m\times2^m$ state-transition matrix. Group its rows as $(0z,1z)$ and columns as $(z0,z1)$ for $z\in\{0,1\}^{m-1}$. After independent row and column permutations, $T_m$ is block diagonal with blocks
\begin{align*}
  B_z=
  \begin{pmatrix}
    1-p_{0z}&p_{0z}\\
    1-p_{1z}&p_{1z}
  \end{pmatrix},
  \qquad
  \det B_z=p_{1z}-p_{0z}.
\end{align*}
Hence
\begin{align*}
  \det T_m=\pm\prod_{z\in\{0,1\}^{m-1}}(p_{1z}-p_{0z})\ne0.
\end{align*}

Restrict the process Hankel matrix \eqref{eq:hankel} to rows and columns indexed by $u,v\in\{0,1\}^m$. Observing $u$ synchronizes the state to $u$, and after a further $m$ symbols the terminal state equals the emitted word $v$. Consequently,
\begin{align*}
  H^{(m,m)}_{u,v}=P_m(u)(T_m^m)_{u,v},
\end{align*}
or
\begin{align*}
  H^{(m,m)}
  =\operatorname{diag}\bigl(P_m(u):u\in\{0,1\}^m\bigr)T_m^m.
\end{align*}
Every $P_m(u)$ is positive and $T_m$ is invertible, so this finite Hankel block is invertible. Therefore $\operatorname{rank}H\ge2^m$. The HMM representation with $2^m$ states gives the reverse inequality.
\end{proof}

The determinant calculation requires only $p_{0z}\ne p_{1z}$ for each $z$. Pairwise distinctness is the stronger assumption used to obtain $L_{\mathrm{sep}}=1$.

\begin{proposition}[A pair giving a lower bound at a finite horizon]\label{prop:context-lower}
For every $m\ge1$, there are two nonisomorphic machines satisfying all conclusions of Proposition~\ref{prop:context-rank} with the same block law of length $m$ and different block laws of length $m+1$. Consequently, $P_m$ does not give uniform identifiability on $\Theta_{\mathrm{ex}}(m,1)$.
\end{proposition}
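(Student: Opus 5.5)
The plan is to build the pair inside the context family \eqref{eq:context-machine}, exploiting that for such a machine the block law of length $m$ is exactly the stationary distribution of the states. By Proposition~\ref{prop:context-rank}, under the stationary law $S_0=X_{-m+1}^0$ almost surely. Hence $P_m(u)=\pi(u)$ for every $u\in\{0,1\}^m$, and
\begin{align*}
  P_{m+1}(ux)=\pi(u)\,e_u(x).
\end{align*}
So it suffices to find two parameter vectors $(p_u)$ and $(p'_u)$ that satisfy the hypotheses of Proposition~\ref{prop:context-rank}, share the same stationary distribution, and give different one-step emission laws at some context of positive stationary weight.

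I will aim for the uniform stationary distribution $\pi\equiv2^{-m}$. The column of $T_m$ indexed by $zx$ receives mass only from the rows $0z$ and $1z$. The column sum for $x=1$ is $p_{0z}+p_{1z}$, and for $x=0$ it is $2-p_{0z}-p_{1z}$. Thus $T_m$ is doubly stochastic exactly when
\begin{align*}
  p_{0z}+p_{1z}=1\qquad\text{for every }z\in\{0,1\}^{m-1},
\end{align*}
with $z=\lambda$ when $m=1$. Every doubly stochastic irreducible matrix has the uniform vector as its unique stationary distribution. So I choose pairwise distinct numbers $t_z\in(0,1/2)$ for $z\in\{0,1\}^{m-1}$ and set
\begin{align*}
  p_{0z}=t_z,\qquad p_{1z}=1-t_z,\qquad\text{and}\qquad
  p'_{0z}=1-t_z,\qquad p'_{1z}=t_z.
\end{align*}
The values $t_z$ lie in $(0,1/2)$ and the values $1-t_z$ lie in $(1/2,1)$, with the $t_z$ distinct. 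Therefore all $2^m$ parameters in each vector are pairwise distinct and lie in $(0,1)$. Proposition~\ref{prop:context-rank} then gives all its conclusions for both machines, including $L_{\mathrm{tar}}=m$ and $L_{\mathrm{sep}}=1$. In particular both processes lie in $\Theta_{\mathrm{ex}}(m,1)$.

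Both machines have $P_m=Q_m\equiv2^{-m}$. On the other hand,
\begin{align*}
  P_{m+1}(0z1)=2^{-m}t_z\ne2^{-m}(1-t_z)=Q_{m+1}(0z1),
\end{align*}
since $t_z\ne1/2$. Isomorphic generators generate the same process law, so the machines are nonisomorphic. Marginalization from length $m+1$ to length $m$ is consistent with the equality $P_m=Q_m$ above.

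For the consequence, suppose $P_m$ gave uniform identifiability on $\Theta_{\mathrm{ex}}(m,1)$. Then this pair, which has equal $P_m$, would have to be isomorphic, which is a contradiction. The only step needing care is the identity $P_m=\pi$, which rests on $S_0=X_{-m+1}^0$ from Proposition~\ref{prop:context-rank}, together with checking the double stochasticity of $T_m$. I expect neither to be a real obstacle.
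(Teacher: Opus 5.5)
Your proof is correct and follows the paper's argument. The doubly stochastic choice $p_{0z}+p_{1z}=1$ forces the uniform block law $P_m\equiv 2^{-m}$, and the word $0z1$ then separates the two laws at length $m+1$. The only difference is the second parameter vector: you swap $t_z\leftrightarrow 1-t_z$, whereas the paper rescales from $\theta_z=(j(z)+1)/(4M)$ to $\theta'_z=(j(z)+1)/(6M)$, and both choices work equally well.
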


\begin{proof}
Let $M=2^{m-1}$ and enumerate $z\in\{0,1\}^{m-1}$ by $j(z)\in\{0,\ldots,M-1\}$. More generally, choose pairwise distinct $\theta_z\in(0,1/2)$ and set
\begin{equation}\label{eq:symmetric-context-parameters}
  p_{0z}=\theta_z,
  \qquad
  p_{1z}=1-\theta_z.
\end{equation}
All $2^m$ parameters are pairwise distinct. For each $z$, the only nonzero entries in columns $z0$ and $z1$ come from rows $0z$ and $1z$, and
\begin{align*}
  (1-p_{0z})+(1-p_{1z})=1,
  \qquad
  p_{0z}+p_{1z}=1.
\end{align*}
The transition matrix is therefore doubly stochastic. Its stationary distribution is uniform, and the state after any $m$ outputs is the emitted block of length $m$. Hence
\begin{equation}\label{eq:uniform-m-block}
  P_m(u)=2^{-m},\qquad u\in\{0,1\}^m.
\end{equation}

Now construct $P$ and $Q$ using, respectively,
\begin{align*}
  \theta_z=\frac{j(z)+1}{4M},
  \qquad
  \theta'_z=\frac{j(z)+1}{6M}.
\end{align*}
Both parameter collections lie in $(0,1/2)$ and are pairwise distinct. Equation \eqref{eq:uniform-m-block} makes their block laws of length $m$ equal, whereas
\begin{align*}
  P_{m+1}(0z1)&=2^{-m}\frac{j(z)+1}{4M},\\
  Q_{m+1}(0z1)&=2^{-m}\frac{j(z)+1}{6M}.
\end{align*}
Thus the block laws of length $m+1$ differ. Since the two processes differ, their finite-state generator $\eps$-machines cannot be isomorphic.
\end{proof}

\begin{corollary}[Comparison of horizons]\label{cor:context-comparison}
For two machines from Proposition~\ref{prop:context-rank}, the structural synchronization bound in Corollary~\ref{cor:structural-block} is
\begin{align*}
  n_{\mathrm{struct}}=2m+2,
\end{align*}
whereas the state-count and rank-dimension forms of the classical linear-realization bound are
\begin{align*}
  n_{\mathrm{count}}=n_{\mathrm{rank}}=2^{m+1}-1.
\end{align*}
The structural synchronization bound is smaller for every $m\ge2$, and
\begin{align*}
  \frac{n_{\mathrm{count}}}{n_{\mathrm{struct}}}
  =\frac{2^{m+1}-1}{2m+2}
  =\Theta\!\left(\frac{2^m}{m}\right).
\end{align*}
Moreover, no contiguous block horizon at most $m$ gives uniform identifiability on the class $\Theta_{\mathrm{ex}}(m,1)$.
\end{corollary}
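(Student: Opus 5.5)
The argument is a direct substitution of the radii from Proposition~\ref{prop:context-rank} into the two general bounds, followed by elementary arithmetic, plus an appeal to Proposition~\ref{prop:context-lower} for the final claim.

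\emph{Structural bound.} By Proposition~\ref{prop:context-rank}, both machines satisfy $L_{\mathrm{tar}}=m$ and $L_{\mathrm{sep}}=1$. In the notation \eqref{eq:abd} this gives $a=b=m$ and $d=1$. Corollary~\ref{cor:structural-block} then applies with horizon $a+b+d+1=2m+2$.

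\emph{Classical bound.} Each machine has $2^m$ states, so Corollary~\ref{cor:universal-block} gives the state-count horizon $r+s-1=2^{m+1}-1$. For the rank-dimension form, I use the remark after Corollary~\ref{cor:universal-block}: the horizon becomes $r'+s'-1$, where $r'$ and $s'$ are the dimensions of the smallest linear realizations. A linear realization of dimension $r'$ forces $\operatorname{rank}H\le r'$. Proposition~\ref{prop:context-rank} gives $\operatorname{rank}H=2^m$, so the minimal dimension is exactly $2^m$, with the given HMM attaining it. Hence $n_{\mathrm{rank}}=2^{m+1}-1$ as well. This is the one step needing care: one must observe that no smaller unrestricted real realization exists, and the full-rank statement supplies exactly that.

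\emph{Comparison.} To show $2m+2<2^{m+1}-1$ for $m\ge2$, I check $m=2$ directly ($6<7$). Induction then finishes: the left side grows by $2$ per step, while the right side grows by $2^{m+1}\ge 8$. At $m=1$ both sides equal $4$ and $3$, which is why the strict inequality starts at $m=2$. The ratio $(2^{m+1}-1)/(2m+2)$ is $\Theta(2^m/m)$ because $2^{m+1}-1\sim 2^{m+1}$ and $2m+2\sim 2m$.

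\emph{Lower bound.} Proposition~\ref{prop:context-lower} gives two nonisomorphic members of $\Theta_{\mathrm{ex}}(m,1)$ with the same block law of length $m$, so $P_m$ does not identify. For $n\le m$, equality of $P_m$ implies equality of $P_n$ by marginalization. The same pair therefore shows that no $P_n$ with $n\le m$ gives uniform identifiability on $\Theta_{\mathrm{ex}}(m,1)$.
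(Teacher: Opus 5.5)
Your proposal is correct and follows the paper's proof. It substitutes $a=b=m$ and $d=1$, uses the state count and Hankel rank $2^m$ in the classical bound, and then invokes Proposition~\ref{prop:context-lower}; you additionally spell out the inequality check for $m\ge2$ and the marginalization down to horizons below $m$, which the paper leaves implicit.
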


\begin{proof}
For both machines, $a=b=m$ and $d=1$, which gives $2m+2$. Each machine has $2^m$ states and Hankel rank $2^m$, so Proposition~\ref{prop:finite-equivalence} gives $2^{m+1}-1$ from either form of the classical linear-realization bound. Proposition~\ref{prop:context-lower} supplies two members of $\Theta_{\mathrm{ex}}(m,1)$ that agree through the block law of length $m$ and are not isomorphic.
\end{proof}

The weighted-automata W-method gives a further comparison on the family of Proposition~\ref{prop:context-rank}. Let $P$ and $Q$ be two members of this family for the same $m$. View their stationary presentations as real weighted automata, with initial row $\pi$, symbol matrices $T^{(x)}$, and terminal vector $\one$ in our convention. Take the word set
\begin{align*}
  B=\{\lambda\}\cup\{0,1\}^m.
\end{align*}
For the presentation of $P$ and each $u\in\{0,1\}^m$, the reachable row $\pi T^{(u)}$ has its only nonzero entry at state $u$, with value $P_m(u)>0$. These rows span the full state space. The same argument applies to $Q$, so $B$ is a state cover for both presentations in the linear sense of \cite[Proposition~5.4(i)]{Kocsis_Rot_2025}. In particular, it covers the competing model, as required by their fault domain at order zero. The matrix $(p_u(v))_{u,v\in\{0,1\}^m}=T_m^m$ is invertible by Proposition~\ref{prop:context-rank}. Hence the continuations in $B$ distinguish all vectors in the linear state space, making $B$ a characterization set in the sense of \cite[Proposition~5.4(ii)]{Kocsis_Rot_2025}. Full Hankel rank also ensures that the presentations are minimal as weighted automata.

Applying \cite[Theorem~5.7]{Kocsis_Rot_2025} at order zero therefore gives the complete test suite
\begin{align*}
  B\cdot\bigl(\{\lambda\}\cup\{0,1\}\bigr)\cdot B,
\end{align*}
where the dots denote concatenation of word sets. Its maximal word length is $2m+1$. Equality of $P_{2m+1}$ and $Q_{2m+1}$ implies agreement on every test by marginalization, and the theorem then gives equality of the entire process laws. This application uses the known family to verify the state cover condition for the competitor. It shows that a conformance test adapted to this family, like the realization below, retains linear order in $m$. Corollary~\ref{cor:context-comparison} compares the general state-count and rank-dimension formulas.

The exponential comparison must be read narrowly. It compares two general sufficient bound types on the full-support binary context subfamily with pairwise distinct parameters from Proposition~\ref{prop:context-rank}. It is not an exponential lower bound against all realization methods. Indeed, this family has an invertible Hankel basis block at word length $m$. We apply the standard linear realization from a fixed Hankel basis \cite{Jaeger_2000,Huang_Ge_Kakade_Dahleh_2016}. Put $U=V=\{0,1\}^m$ and define
\begin{align*}
  H_{U,V}&=(P(uv))_{u\in U,\,v\in V},\qquad
  H^{(x)}_{U,V}=(P(uxv))_{u\in U,\,v\in V},
  \quad x\in\{0,1\}.
\end{align*}
The matrix $H_{U,V}=H^{(m,m)}$ is invertible by Proposition~\ref{prop:context-rank}. Define
\begin{align}
  A_x&=H^{(x)}_{U,V}H_{U,V}^{-1},\label{eq:adapted-operator}\\
  \alpha&=(P(v))_{v\in V}H_{U,V}^{-1},
  \quad \beta =(P(u))_{u\in U}^{\mathsf T}.\label{eq:adapted-boundaries}
\end{align}
We verify the realization. Let $(\rho,(M_x)_{x\in\X},\omega)$ be a minimal $2^m$-dimensional linear representation of the process, with $M_u=M_{u_1}\cdots M_{u_{|u|}}$. Let $F_U$ have rows $\rho M_u$, $u\in U$, and let $G_V$ have columns $M_v\omega$, $v\in V$. Then
\begin{align*}
  H_{U,V}=F_U G_V,
  \qquad
  H^{(x)}_{U,V}=F_U M_x G_V.
\end{align*}
Invertibility of $H_{U,V}$ makes both $F_U$ and $G_V$ invertible. Consequently,
\begin{align*}
  A_x=F_U M_x F_U^{-1},\qquad
  \alpha=\rho F_U^{-1},\qquad
  \beta=F_U\omega.
\end{align*}
Therefore, for every $w=w_1\cdots w_k$,
\begin{align*}
  \alpha A_{w_1}\cdots A_{w_k}\beta
  =\rho M_{w_1}\cdots M_{w_k}\omega
  =P(w).
\end{align*}
This representation is minimal. The invertible $2^m\times2^m$ Hankel block gives a lower bound of $2^m$ on the full Hankel rank, and the context presentation supplies the matching upper bound. The quantities in \eqref{eq:adapted-operator}--\eqref{eq:adapted-boundaries} use block probabilities only through length $2m+1$, one symbol shorter than the structural synchronization bound. For a process in the family of Proposition~\ref{prop:context-rank}, they determine its entire law among competing processes whose Hankel rank is at most $2^m$. Indeed, equality of the block laws gives the same invertible basis block and the same realization formulas for both processes. This rank restriction is part of the identification claim. Invertibility of a finite Hankel block alone does not certify that the full process rank is no larger. This is a process-adapted refinement of linear realization rather than the general classical linear-realization formula. If the context structure of order $m$ is known in advance, the identifying horizon is smaller still:
\begin{equation}\label{eq:context-reconstruction}
  p_u=\frac{P_{m+1}(u1)}{P_m(u)}.
\end{equation}
Thus $P_{m+1}$ reconstructs every emission probability and the de Bruijn update is known. Proposition~\ref{prop:context-lower} shows that $m+1$ is sharp when the order-$m$ context structure is known. The conclusion is that the rank-dimension form of the classical linear-realization bound can be exponentially pessimistic, while the structural synchronization bound and a realization adapted to the process both retain logarithmic order in the Hankel rank.

\section{Discussion and future research}\label{sec:discussion}

The paper establishes two complementary general principles for finite horizons. The classical linear-realization bound gives the universal horizon $r+s-1$ for arbitrary finite hidden presentations. Exact finite-state generator $\eps$-machines also admit the structural synchronization bound $a+b+d+1$, where $a$ and $b$ are target synchronization radii and $d$ measures predictive separation. Neither bound dominates the other in all models. On the full-support binary context subfamily with pairwise distinct parameters, the structural formula is exponentially shorter than the state-count and rank-dimension forms of the classical bound. An invertible Hankel basis block or prior knowledge of the order-$m$ context structure gives a sharper horizon than either general formula. Thus the example compares sufficient formulas rather than establishing an exponential advantage over optimal realization.

The overall reach-successor-distinguish architecture of the structural proof has a classical precedent in the W-method. The simultaneous synchronization lemma is the step specific to the present stationary setting. A word that synchronizes $P$ may be only a posterior mixture of states under $Q$. Choose a state of $Q$ from which the synchronizing word for $P$ is admissible. Prepending a synchronizing word for $Q$ that reaches this state produces a common word that synchronizes both models. The resulting symmetric term $a+b$ is the price of state alignment in this proof. Appending one symbol then recovers successors. Alternatively, one extra future symbol permits normalized shifting of the matched state morphs.

Target-by-target synchronization is already present in Jonoska's synchronizing deterministic presentations of sofic shifts \cite{Jonoska_1996}. Cai and Frongillo explicitly formulate deterministic presentations as partial DFAs and identify the relevant convention with exact synchronization \cite{Cai_Frongillo_2022}. The prescribed-target common-synchronization problem for two complete deterministic automata is explicitly studied by Bradshaw, Clow, and Stacho \cite{Bradshaw_Clow_Stacho_2026}. The concatenation in Lemma~\ref{lem:simultaneous} is therefore a support-level automata argument, not a new probabilistic principle. Its role here is to exploit equality of finite-word supports even though the two autonomous generators may have different partial transition maps and statewise admissibility domains. Matching truncated state morphs then yields the explicit $a+b+d+1$ finite identification horizon. To the author's knowledge, this combination and bound have not previously been stated for exact finite-state generator $\eps$-machines under their stationary laws.

\paragraph{Beyond exact synchronization.}
Every finite-state generator $\eps$-machine is asymptotically synchronizable \cite{Travers_Crutchfield_2011b}. The classical linear-realization bound gives pairwise finite distinguishability for every pair of finite-state generator $\eps$-machines. The open problem is therefore not existence of some finite identifying block for a fixed pair. Rather, it is whether asymptotic synchronization yields a useful structural synchronization horizon, stability modulus, or recognizable population construction that improves on either the state-count form or the rank-dimension form of the classical linear-realization bound. An unbounded labeled-table sequence is not a nontrivial answer. Corollary~\ref{cor:labeled-tables} makes it complete for all stationary ergodic finite-alphabet processes. A genuinely different problem would use an unlabeled row cloud or another lossy observable. In that case, state recognition and successor recovery would have to be proved anew.

\paragraph{Identifiability, stability, and estimation.}
Identifiability alone does not control sampling error. A stability theory should relate perturbations of observable summaries to distances between generator isomorphism classes, with bounds reflecting probabilities of synchronizing words, separation of truncated state morphs, and sensitivity of the stationary distribution. Such bounds could support consistent estimation and finite-sample guarantees. Convergence of predictive states \cite{Loomis_Crutchfield_2023} and CSSR consistency arguments in exact regimes \cite{Shalizi_Shalizi_Crutchfield_2002} provide starting points.

\paragraph{Infinite predictive state spaces.}
Countably infinite generators require replacements for the finite synchronization and separation radii based on local conditions or stationary mass. Uncountable predictive state spaces additionally require a measurable predictive transition kernel and a topology compatible with prediction. Extending identifiability, stability, and estimation to these settings remains outside the finite-state theory proved here.

\section*{Statements and Declarations}

\paragraph{Funding.}
This research did not receive any specific grant from funding agencies in the public, commercial, or not-for-profit sectors.

\paragraph{Declaration of competing interest.}
The author declares that there are no known competing financial interests or personal relationships that could have appeared to influence the work reported in this paper.

\paragraph{Data availability.}
No data were used or generated for the research described in this article.

\paragraph{Declaration of generative AI and AI-assisted technologies in the manuscript preparation process.}
Generative AI tools, namely OpenAI GPT-5.6 Sol via Codex, were used during manuscript preparation for language editing, literature organization, notation consistency checks, and critical scrutiny of mathematical arguments. The author reviewed and edited all AI-assisted output and takes full responsibility for the content of the manuscript.

\bibliographystyle{plain}
\bibliography{references}

\end{document}